\pgfplotsset{compat=newest}
\newcommand{\R}{\mathbb{R}}
\newcommand{\be}{\mathbf e}
\newcommand{\bu}{\mathbf u}
\newcommand{\bv}{\mathbf v}
\newcommand{\bx}{\mathbf x}
\newcommand{\cE}{\mathcal E}
\newcommand{\bfone}{\mathbf{1}}
\newtheorem{assumption}{Assumption}[section]
\newtheorem{remark}{Remark}[section]
\begin{document}

\title{Discretization error analysis for a radially symmetric harmonic map heat flow problem }
\author{Nam Anh Nguyen \thanks{Institut f\"ur Geometrie und Praktische  Mathematik, RWTH-Aachen
University, D-52056 Aachen, Germany (nguyen@igpm.rwth-aachen.de)} \and Arnold Reusken\thanks{Institut f\"ur Geometrie und Praktische  Mathematik, RWTH-Aachen
University, D-52056 Aachen, Germany (reusken@igpm.rwth-aachen.de)} 
}
\maketitle
%
\begin{abstract} 
In this paper we study the harmonic map heat flow problem for a radially symmetric case. The corresponding partial dfferential equation plays a key role in many analyses of  harmonic map heat flow problems. We consider  a basic discretization method for this problem, namely a second order finite difference discretization in space  combined with a semi-implicit Euler method in time. The semi-implicit Euler method results in a linear problem in each time step. We restrict to the regime of smooth solutions of the continuous problem and present an error analysis of this discretization method. This results in optimal order discretization error bounds (apart from a logarithmic term). We also present discrete energy estimates that mimic the decrease of the energy  of the continuous solution.  
\end{abstract}
\begin{keywords}  Harmonic map heat flow problem, discrete stability analysis, discretization error bounds, discrete energy estimates.
 \end{keywords}
\section{Introduction}
Let $\Omega \subset \Bbb{R}^N$, $N=2,3$,  be a Lipschitz domain and $S^2$ the unit sphere in $\Bbb{R}^3$. The harmonic  map heat flow (HMHF) problem is as follows. Given an initial condition $\bu_0: \Omega \to S^2$, determine $\bu(\cdot,t): \Omega \to S^2$ such that
\begin{equation} \label{HMHFeq}
 \partial_t\mathbf{u}= \Delta \mathbf{u} + \rvert \nabla \mathbf{u} \lvert^2 \mathbf{u}, \quad \bu(\cdot,0)= \bu_0, \quad \bu(\cdot,t)_{|\partial \Omega}=(\bu_0)_{|\partial \Omega}, \quad t \in (0,T]. 
\end{equation}
This problem is obtained as the $L^2$ gradient flow of the Dirichlet energy $\int_\Omega |\nabla \bv|^2 \, dx$ on vector fields $\bv: \Omega \to \Bbb{R}^3$ that satisfy a pointwise unit length constraint. Unit length minimizers of this Dirichlet energy are called harmonic maps.  The HMHF problem is  closely related to the Landau-Lifshitz-Gilbert (LLG) equation. The HMHF equation can be considered as the limit of the LLG equation where the precessional term vanishes and only damping is left \cite{Melcher2011}. 
Harmonic maps, HMHF and LLG equations have numerous  applications, for example, in the modeling of ferromagnetic materials or of liquid crystals, cf. e.g.~\cite{Lakshmanan2011,LiuLi2013Skyr,prohl2001,Heinze2011}.
We refer to \cite[Chapter III, Section 6]{Struwe08} and references therein for a further introduction to HMHF.

There is extensive mathematical literature in which topics related to well-posedness, weak formulations, regularity, blow-up phenomena and convergence of solutions of the HMHF problem to harmonic maps are studied, cf. e.g~\cite{Struwe85,Struwe08,Eells1964HarmonicMO,Hamilton1975HarmonicMO,KCChang89,Freire95,ChangDingYe1992,vandHout2001}.
 
 Early work on the development and analysis of numerical methods for HMHF or LLG problems is found in \cite{BarPro06,BaLuPr09,prohl2001,BartelsProhl2007,Alouges2008FEMLL, AlougesJaisson06}. In recent years, there has been a renewed interest in the numerical analysis of methods for this problem class \cite{KovacsLuebich,BaKoWa22,Gui22,BBPS24}.
 
 In this paper we study the HMHF problem for a specific \emph{radially symmetric} case. Assume $\Omega$ is the unit disk in $\R^2$ and assume the solution $\bu$  to be radially symmetric. Using polar coordinates on the disk a special type of solution  of \eqref{HMHFeq} is given by
 \begin{equation} \label{polcoord} \mathbf{u}(r,\psi,t) =
   \begin{pmatrix}  \cos{\psi} \sin{ u(r,t) } \\  \sin{\psi} \sin{ u(r,t) } \\ \cos{ u(r,t)}\end{pmatrix}, 
\end{equation}
and the partial differential equation in \eqref{HMHFeq} can be reduced to 
 \begin{equation} \label{PDE2}
  u_t  = u_{xx} + \frac{1}{x}u_x - \frac{\sin(2u)}{2 x^2} \quad \text{on}~ (0,1] \times [0,T],  \\
\end{equation}
  for the function $u(x,t)=u(r,t)$ on $[0,1] \times [0,T]$, as is shown in \cite[Section 2.1]{RoxanasPhD2017}. The result in \cite[Lemma 2.2]{Chang1991} proves that if the solution $\mathbf{u}$ to \eqref{HMHFeq} is of the form \eqref{polcoord}, then $u$ solves \eqref{PDE}.   
Note that due to the structure  of the solution $\bu$ in \eqref{polcoord} the unit length constraint is satisfied. The
partial differential equation in \eqref{PDE2} has a strong nonlinearity due to the term  $\sin(2u)$ and has a singular behavior for $x \downarrow 0$. 
  The Dirichlet energy in terms of the  function $\mathbf{u}$ is given by 
  \begin{align*}
  	E(\mathbf{u}) = \frac{1}{2} \int_\Omega | \nabla \mathbf{u} |^2 \,\mathrm{d} \mathbf{x},
  \end{align*}
 which by transformation into polar coordinates and using radial symmetry yields
 \begin{equation}
 	E(\mathbf{u}) =  \pi \int_0^1 \big( u_x^2 + \frac{\sin^2 u}{x^2})x \, dx =: \pi \cE(u). \label{energy}
 \end{equation} 
Equation \eqref{PDE2} has been used in models of nematic liquid crystals \cite{vandHout2001}. More importantly, 
 the radially symmetric case \eqref{PDE2} plays a fundamental role in the analysis of HMHF.  
 In the seminal works \cite{ChangDingYe1992,Chang1991} the authors study  finite time singularities of \eqref{HMHFeq}
 for the two-dimensional case $N=2$. In these studies the radially symmetric case  \eqref{PDE2} 
 plays a crucial role. It is shown that for this problem with initial data $u_0$ with $|u_0| \leq \pi$ a unique global smooth solution exists, whereas for the case with  $|u_0(1)|> \pi$ the solution blows up in finite time. The latter means that for the solution $u$ the derivative at $x=0$ becomes arbitrary large: $\lim_{ x\downarrow 0} |u_x(x,t)| \to \infty$ for $t \uparrow T_{\rm crit}$. In these analyses of the problem \eqref{PDE2} the maximum principle is an important tool.
The work \cite{ChangDingYe1992} has motivated further investigations of the blow-up behavior, e.g., \cite{Bertsch_DalPasso_vanderHout_2002,Topp02} where infinitely many solutions of \eqref{PDE2} are constructed whose energy is  bounded by the initial energy for all times $t$, but can increase at some $t>0$, even for a smooth initial condition. 
A further related topic is the analysis   of the blow up rate. Several theoretical aspects of blow up rates of solutions of \eqref{PDE2} are studied in  \cite{vdBergHulshofKing2003,AngHulsMatano09,RaphaelSchweyer13}.
In a series of works, Gustafson \emph{et al.} \cite{GUAN20091,Gustafson_Nakanishi_Tsai_2010} investigate the well-posedness and regularity of the $m$-equivariant version of \eqref{PDE2}. Hocquet \cite{Hocquet19} has studied well-posedness and regularity of a stochastic version of \eqref{PDE2}.

 There are only very few papers in which numerical aspects  of \eqref{PDE2} are treated. In \cite{Schans06} an adaptive method is presented, based on finite element discretization in space combined with a stiff ODE solver. The paper \cite{HaynesHuangZegeling2013} treats a moving mesh ansatz for the discretization of \eqref{PDE2}, using  finite differences  in space in combination with an ODE solver for stiff systems. In both works, the authors are interested in capturing the blow up behavior of the solution with their numerical method. 

We are not aware of any literature in which for a discretization of \eqref{PDE2}, even in the regime of smooth solutions, a rigorous error analysis is presented. Such an error analysis is the main contribution of this paper. We consider a very basic discretization of \eqref{PDE2}, namely a second order finite difference discretization in space  combined with a semi-implicit Euler method in time. The semi-implicit Euler method results in a linear problem in each time step. We restrict to the regime of smooth solutions of the continuous problem and present an analysis based on the classical ``stability plus consistency'' framework. Note, however, that in the analysis of the consistency error we also have to bound the linearization error.  The  analyses of blow up behaviour of the continuous problem \eqref{PDE2} show that there is a critical dependence of stability properties of the problem on  the size of the initial condition  ($|u_0| \leq \pi$: globally smooth solution; $|u_0(1)| >\pi$: finite time blow up).   In view of this, it is not surprising that the key difficulty in the error analysis of the discrete scheme lies in the derivation of satisfactory stability estimates. Our discrete stability analysis uses $M$-matrix theory, which in a certain sense mimics the maximum principle arguments used in the analysis of the continuous problem, cf. \cite{Chang1991}. A key special ingredient in our stability analysis is that besides stability estimates $\|\bu^n\|_\infty \leq \|\bu^0\|_\infty$, where $\bu^n$ denotes the vector of discrete approximations after $n$ semi-implicit Euler time steps, we also derive estimates of the form $ \|D^{-\alpha}\bu^n\|_\infty \leq \|D^{-\alpha}\bu^0\|_\infty$, with $D$ a diagonal scaling matrix, cf. Section~\ref{sec:Stability}. The latter estimates are a discrete analogon of the control of the derivative of the continuous solution at $x=0$. Based on these stability results  we derive a discretization error bound that is of optimal order with respect to both the time step and the spatial mesh size, apart from a logarithmic term in the spatial mesh size. We also present discrete energy estimates that mimic the decrease of the energy \eqref{energy} of the continuous solution.   

The remainder of the paper is organized  as follows.  In Section~\ref{sectdiscrete} we formulate the continuous problem and its discretization. For this discrete problem a   stability analysis is presented in Section~\ref{sec:Stability}. In Section~\ref{Secdiscrerror} the stability estimates are combined with an analysis of consistency (including linearization) errors, resulting in an (almost) optimal discretization error bound, cf. Theorem \ref{mainthm}. In Section~\ref{sec:Energy} discrete energy dissipation estimates are derived. We validate our theoretical findings in Section~\ref{sec:NumericalExp} with numerical examples which demonstrate convergence rates, stability properties and discrete energy dissipation.

\section{A radially symmetric HMHF problem and its discretization} \label{sectdiscrete}
Let $I:=[0,1]$ and given $u_0: I \to \R$ with $u_0(0)=u_0(1)=0$, we are interested in solutions $u: I \times [0,T] \to \R$,  ($u=u(x,t)$) of 
 \begin{equation} \label{PDE} \begin{split}
  u_t & = u_{xx} + \frac{1}{x}u_x - \frac{\sin(2u)}{2 x^2} \quad \text{on}~ I \times [0,T]  \\
   u(\cdot, 0) & = u_0 \quad \text{on}~I, \quad u(0,t)=u(1,t)=0 \quad \text{for}~t \in [0,T].
  \end{split} \end{equation}
  \begin{remark} \rm
   In analyses of HMHF problems the  case with an \emph{in}homogeneous boundary condition $u_0(1)=b \neq 0$ is often treated. To simplify the presentation we restrict to the case of a homogeneous boundary condition $u_0(1)=0$. With rather straightforward modifications our analysis can also be applied to the inhomogeneous case. In a finite difference discretization of the inhomogeneous boundary condition, the boundary data is shifted to the right hand side and the remaining discrete operator corresponds to a homogeneous boundary condition, to which our error analysis can be applied.  
  \end{remark}

  The linear part of the spatial differential operator in \eqref{PDE} is denoted by $Lu= - u_{xx}-\frac{1}{x}u_x$.
  The $L^2$ scalar product on $I$ is denoted by $(\cdot,\cdot)$, and  we use a weighted $L^2$-scalar product $(f,g)_w:=\int_0^1 x f(x)g(x)\, dx$.  Note that for  $v\in H^1_0(I)$: 
\begin{equation} \label{L2weighted}
 (Lu,v)_w=(-u_{xx} - \frac{1}{x}u_x,v)_w=(-x u_{xx} - u_x,v)=(xu_x,v_x)=(u_x,v_x)_w,
\end{equation}
hence $L$ is symmetric positive definite with respect to $(\cdot,\cdot)_w$. 
We introduce a basic discretization of the problem \eqref{PDE}. First we explain the discretization in space, which uses standard finite differences. We use a uniform grid with mesh size $h$ such that $(N+1)h=1$, $N \in \mathbb{N}$ and mesh points $x_i=ih$, $0 \leq i \leq N+1$. The second derivative $-\partial_{xx}u(x_i)$ is approximated with the finite difference $\tfrac{1}{h^2}\left(-u(x_{i-1}) + 2u(x_i) -u(x_{i+1})\right)$ and the first derivative $\partial_{x}u(x_i)$ by the central difference $\tfrac{1}{2h}\left(-u(x_{i-1}+u(x_{i+1})\right)$. We introduce the $N\times N$ matrices
\[
  A:=\frac{1}{h^2}{\rm tridiag}(-1,2,-1), \quad B:=\frac{1}{2h}{\rm tridiag}(-1,0,1),\quad D:={\rm diag}(x_1, \ldots,x_N),
\]
and $C:=A-D^{-1}B$. Note that $C$ represents a discretization of the linear differential operator $L$. 
For the nonlinear term we introduce, for given $\bu=(u_1, \ldots, u_N)^T\in \R^N$ the diagonal matrix 
\[
  G(\bu):={\rm diag}\Big( \frac{\sin (2u_1)}{2u_1}, \ldots, \frac{\sin (2u_N)}{2u_N}\Big). 
\]
For discretization in time of  we use a fixed time step $\Delta t=\frac{T}{M}$, $M \in \mathbb{N}$. The nonlinear term in  \eqref{PDE} can be written as $\frac{\sin(2u)}{2 x^2}=\frac{\sin(2u)}{2 u}\frac{u}{x^2}$. For the first factor, which is nonlinear, we have  $\left|\frac{\sin(2u)}{2 u}\right| \in [0,1]$, and the second factor, which is linear, may blow up for $x \downarrow 0$. Due to the parabolic nature of the PDE it is natural to use an implicit time stepping scheme and the two factor splitting of the nonlinear term suggests an semi-implicit  linearization.   
This motivates the following basic discretization method for \eqref{PDE}:
\begin{equation} \label{discrete}
 \frac{\bu^{n+1}-\bu^n}{\Delta t}= - C \bu^{n+1}-G(\bu^n)D^{-2}\bu^{n+1}, \quad 0 \leq n \leq M-1,
\end{equation}
with $\bu^0:=(u_0(x_1), \ldots u_0(x_N))^T$. Note that we use a simple first order linearization for the nonlinear term in \eqref{PDE}.
Well-posedness of this scheme will be discussed below.
\section{Stability estimates} \label{sec:Stability}
A key point in the analysis are discrete stability bounds, not only for the approximation $\bu^n$ of $u(\cdot,t_n)$ but also for $D^{-\alpha}\bu^n$ with $\alpha \in (0,1]$, which are approximations of $x^{-\alpha}u(\cdot, t_n)$.
\begin{remark} \label{Rem1}
 \rm In a certain sense boundedness of $\|D^{-\alpha}\bu^n\|_\infty$, uniformly in $h$ and $n$, mimics the ``smoothness'' of the solution $u(0,t_n)$ of the continuous problem. This smoothness in particular means that $|u_x(0,t)|$ is bounded (``the derivative at 0 does not blow up''). Using $u(0,t)=0$ we have $u_x(0,t)= \frac{1}{x}u(x,t)+ \mathcal{O}(x)$ ($ x \to 0$). Hence boundednes of $|\frac{1}{x}u(x,t)|$ close to 0 is equivalent to boundedness of $|u_x(0,t)|$. The discrete analogon of this is the boundedness of $\|D^{-1}\bu^n\|_\infty$. For 
 $\alpha \in (0,1)$ we have a discrete analogon of Hölder continuity (with exponent $\alpha$) at $x=0$. 
\end{remark}

The stability analysis is based on $M$-matrix theory. We recall a few basic results from that theory. Define 
\[
  Z_N:=\left\{ \, A \in \R^{N \times N}~|~ a_{ij} \leq 0 \quad \text{for all}~i\neq j\,\right\}.
\]
A matrix $A$ is called an $M$-matrix if $A \in Z_n$ and $\det(A) \neq 0 $, $A^{-1} \geq 0 $ (inequality elementwise).
A matrix $A$ is called  weakly diagonally dominant if $\sum_{j \neq i} |a_{ij}| \leq |a_{ii}|$ for all $i$. We use the notation $\bfone:=(1,\ldots,1)^T  \in \R^N$. Furthermore, for $\bx \in \R^N$, $|\bx|:=(|x_1|, \ldots, |x_N|)^T$. From the literature we have the following result ({Fiedler Thm. 5.1).
\begin{lemma} \label{lemma1}
A matrix $A \in Z_N$ is an $M$-matrix if and only if there exists $\bx > 0$ such that $A \bx >0$.
\end{lemma}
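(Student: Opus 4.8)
The plan is to prove the two implications separately. The forward direction (being an $M$-matrix implies the existence of a positive vector $\bx$ with $A\bx>0$) is essentially immediate, whereas the converse carries the real content and is where I expect to spend the effort.

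For the forward direction, suppose $A$ is an $M$-matrix, so that $A$ is nonsingular and $A^{-1}\ge 0$ elementwise. I would simply set $\bx:=A^{-1}\bfone$, so that $A\bx=\bfone>0$ by construction. It remains to verify $\bx>0$ strictly. Since $A^{-1}\ge0$, each component $(A^{-1}\bfone)_i$ is the sum of the entries in the $i$-th row of $A^{-1}$, hence nonnegative; and it cannot vanish, since a zero row of $A^{-1}$ would contradict $\det(A^{-1})\neq0$. Thus $\bx>0$ and this direction is complete.

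For the converse, assume $A\in Z_N$ and fix $\bx>0$ with $A\bx>0$. The key device is a diagonal similarity scaling. Writing $P:=\operatorname{diag}(x_1,\dots,x_N)$ with $x_i>0$ the components of $\bx$, I would pass to $\hat A:=P^{-1}AP$. This preserves the sign pattern, so $\hat A\in Z_N$, and a short computation gives $(\hat A\bfone)_i=x_i^{-1}(A\bx)_i>0$; that is, $\hat A$ has strictly positive row sums, and since its off-diagonal entries are $\le0$ this forces $\hat a_{ii}>0$. I would then use the standard $M$-matrix splitting: choose $s:=\max_i\hat a_{ii}>0$ and set $\hat B:=sI-\hat A$, which satisfies $\hat B\ge0$ because the off-diagonal entries of $\hat A$ are nonpositive and $s$ dominates its diagonal. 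The positive-row-sum property now reads $(\hat B\bfone)_i=s-(\hat A\bfone)_i<s$ for every $i$, i.e.\ every row sum of the nonnegative matrix $\hat B$ is strictly less than $s$, whence $\|\hat B\|_\infty<s$ and therefore $\rho(\hat B)\le\|\hat B\|_\infty<s$.

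The heart of the argument, and the step I regard as the main obstacle, is turning the spectral estimate $\rho(\hat B)<s$ into elementwise nonnegativity of the inverse. Here I would invoke the Neumann series: since $\rho(s^{-1}\hat B)<1$, the series $\sum_{k\ge0}(s^{-1}\hat B)^k$ converges and equals $(I-s^{-1}\hat B)^{-1}$, which is a sum of products of the nonnegative matrix $\hat B$ and hence is itself $\ge0$. Consequently $\hat A=s(I-s^{-1}\hat B)$ is nonsingular with $\hat A^{-1}=s^{-1}(I-s^{-1}\hat B)^{-1}\ge0$. Undoing the scaling, $A^{-1}=P\hat A^{-1}P^{-1}$ is a product of a nonnegative matrix with two positive diagonal matrices, so $A^{-1}\ge0$ and $A$ is nonsingular; together with $A\in Z_N$ this shows $A$ is an $M$-matrix, completing the proof.
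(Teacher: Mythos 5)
Your proof is correct in both directions, but note that the paper itself gives no proof of this lemma at all: it is quoted directly from the literature (Fiedler, Theorem~5.1), so there is no in-paper argument to compare against. What you have supplied is a self-contained version of the classical argument. The forward direction via $\bx := A^{-1}\bfone$ is sound, including the observation that a vanishing row sum of the nonnegative matrix $A^{-1}$ would force an entire zero row and hence singularity. The converse is the standard regular-splitting proof: the diagonal similarity $\hat A = P^{-1}AP$ with $P = \operatorname{diag}(\bx)$ converts the hypothesis $A\bx>0$ into strictly positive row sums, the splitting $\hat A = sI - \hat B$ with $\hat B \ge 0$ and $\rho(\hat B) \le \|\hat B\|_\infty < s$ then gives $\hat A^{-1} = s^{-1}\sum_{k\ge 0}(s^{-1}\hat B)^k \ge 0$ by the Neumann series, and the positive diagonal scaling is undone harmlessly since $A^{-1} = P\hat A^{-1}P^{-1}$. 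Every step checks out; in effect you have replaced the paper's external citation by a complete elementary proof, which makes the stability analysis of Section~3 (where this lemma feeds into Lemmas~3.2--3.5) fully self-contained.
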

\ \\
Using this we obtain the following.
\begin{lemma} \label{lemma2}
 If $A \in Z_N$ is weakly diagonally dominant and $a_{ii} \geq 0$ for all $i$, then for all $\delta \geq 0$ we have $\det(I+\delta A) \neq 0$ and
 \[
   \|(I+ \delta A)^{-1}\|_\infty \leq 1.
 \]
\end{lemma}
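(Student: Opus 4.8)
The plan is to apply Lemma~\ref{lemma1} to the shifted matrix $M := I + \delta A$ and then convert nonnegativity of $M^{-1}$ into the desired norm bound. First I would record the structural properties of $M$. Since $\delta \geq 0$ and $a_{ij} \leq 0$ for $i \neq j$, the off-diagonal entries of $M$ are $m_{ij} = \delta a_{ij} \leq 0$, so $M \in Z_N$. The diagonal entries are $m_{ii} = 1 + \delta a_{ii}$.

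The decisive step is to show $M\bfone > 0$, which is exactly where the shift by $I$ together with the hypotheses on $A$ comes into play. Computing componentwise and using $m_{ij} \leq 0$ for $j \neq i$, weak diagonal dominance, and $a_{ii} \geq 0$ (so that $|a_{ii}| = a_{ii}$),
\[
 (M\bfone)_i = m_{ii} - \sum_{j \neq i}|m_{ij}| = 1 + \delta a_{ii} - \delta\sum_{j\neq i}|a_{ij}| = 1 + \delta\Big(a_{ii} - \sum_{j\neq i}|a_{ij}|\Big) \geq 1,
\]
so $M\bfone \geq \bfone > 0$ with $\bfone > 0$. Lemma~\ref{lemma1} then yields that $M$ is an $M$-matrix; in particular $\det(I+\delta A) = \det M \neq 0$ and $M^{-1} \geq 0$ elementwise. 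I expect this computation to be the conceptual crux: the point is that the identity shift upgrades the merely \emph{weak} diagonal dominance of $A$ (for which $A\bfone$ may vanish, e.g.\ when $A$ is the discrete Laplacian of a constant) into the strict positivity $M\bfone > 0$ required by Lemma~\ref{lemma1}. Everything else is bookkeeping.

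It remains to bound $\|M^{-1}\|_\infty$. Because $M^{-1} \geq 0$, each row sum of absolute values equals the ordinary row sum, so $\|M^{-1}\|_\infty = \max_i (M^{-1}\bfone)_i = \|M^{-1}\bfone\|_\infty$. Finally I would use monotonicity: multiplication by the nonnegative matrix $M^{-1}$ preserves elementwise inequalities, so applying $M^{-1}$ to $M\bfone \geq \bfone$ gives $\bfone = M^{-1}(M\bfone) \geq M^{-1}\bfone \geq 0$. Hence $0 \leq M^{-1}\bfone \leq \bfone$, which forces $\|M^{-1}\bfone\|_\infty \leq 1$ and therefore $\|(I+\delta A)^{-1}\|_\infty \leq 1$, completing the argument. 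The case $\delta = 0$ (where $M = I$) is covered automatically by the same chain of inequalities.
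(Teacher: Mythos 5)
Your proposal is correct and follows essentially the same route as the paper's proof: shift to $M=I+\delta A\in Z_N$, verify $M\bfone\geq\bfone$ from weak diagonal dominance and $a_{ii}\geq 0$, invoke Lemma~\ref{lemma1} to get the $M$-matrix property, and then use $M^{-1}\geq 0$ together with $0\leq M^{-1}\bfone\leq M^{-1}M\bfone=\bfone$ to bound the row sums. The only difference is expository: you spell out why the identity shift upgrades weak dominance to the strict positivity needed in Lemma~\ref{lemma1}, which the paper leaves implicit.
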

\begin{proof} If $A \in Z_N$, then $B:=I+\delta A \in Z_N$. From the weak diagonal dominance and $a_{ii} \geq 0$ we get $1+\delta \sum_{j=1}^N a_{ij} \geq 1$ for all $i$. Hence, $B\bfone \geq \bfone$ holds. From Lemma~\ref{lemma1} we conclude that $B$ is an $M$-matrix, hence $B$ invertible and $B^{-1} \geq 0$, which implies that $\|B^{-1}\|_\infty =\max_{1 \leq i \leq N} \sum_{j=1}^N\left| (B^{-1})_{ij}\right|=\max_{1 \leq i \leq N} \sum_{j=1}^N (B^{-1})_{ij} =\|B^{-1} \bfone \|_\infty$ holds. Finally note that $0 \leq   B^{-1} \bfone \leq B^{-1} B\bfone = \bfone$ holds.  
\end{proof}
\ \\
Using this result a stability analysis of the scheme \eqref{discrete} is very straightforward. Note that for the matrix $C$ we have
\begin{align*}
  -C &= {\rm tridiag}(\gamma_i,\delta_i,\beta_i)_{1 \leq i \leq N} \quad \text{with} \\
  \gamma_i &= \tfrac{1}{h^2}(1-\tfrac{1}{2i}), \quad 2 \leq i \leq N, \\
  \delta_i &= - \tfrac{2}{h^2}, \quad 1 \leq i \leq N, \\ 
  \beta_i & = \tfrac{1}{h^2} (1 +\tfrac{1}{2i}), \quad 1 \leq i \leq N-1.
  \end{align*}
\begin{lemma} \label{lemma3}
 Let $\bu^0$ be such that $\|\bu^0\|_\infty \leq \frac{\pi}{2}$ holds. Then the scheme \eqref{discrete} is well-defined  and $\|\bu^n\|_\infty \leq \|\bu^0\|_\infty$ for all $n$ holds. 
\end{lemma}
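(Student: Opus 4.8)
The plan is to rewrite one time step of \eqref{discrete} as a single linear solve and then apply Lemma~\ref{lemma2}. Collecting the $\bu^{n+1}$ terms gives
\[
  \big(I + \Delta t\,(C + G(\bu^n)D^{-2})\big)\,\bu^{n+1} = \bu^n,
\]
so everything reduces to showing that the matrix $A := C + G(\bu^n)D^{-2}$ satisfies the hypotheses of Lemma~\ref{lemma2}: $A \in Z_N$, weak diagonal dominance, and $a_{ii}\geq 0$. If so, Lemma~\ref{lemma2} with $\delta = \Delta t \geq 0$ yields both $\det(I+\Delta t\,A)\neq 0$ (well-definedness, i.e.\ $\bu^{n+1}$ is uniquely determined) and $\|(I+\Delta t\,A)^{-1}\|_\infty \leq 1$, whence $\|\bu^{n+1}\|_\infty \leq \|\bu^n\|_\infty$.

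I would run the whole argument as an induction on $n$, carrying along the hypothesis $\|\bu^n\|_\infty \leq \tfrac{\pi}{2}$, which holds for $n=0$ by assumption. The induction is genuinely needed because of the sign of the diagonal perturbation $G(\bu^n)D^{-2}$: its $i$-th diagonal entry is $\tfrac{\sin(2u_i^n)}{2u_i^n}\,x_i^{-2}$, and the factor $\tfrac{\sin(2u)}{2u}$ is nonnegative precisely when $|u|\leq \tfrac{\pi}{2}$ (for $|2u|\leq\pi$ the numerator and denominator share the same sign). Thus the inductive hypothesis guarantees $G(\bu^n)\geq 0$, so $G(\bu^n)D^{-2}$ is a nonnegative diagonal matrix.

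The verification that $A$ lies in the class required by Lemma~\ref{lemma2} then splits into a statement about $C$ plus a perturbation argument. Using the explicit entries $\gamma_i,\delta_i,\beta_i$ listed above, the off-diagonal entries of $C$ are $-\gamma_i = -\tfrac{1}{h^2}(1-\tfrac{1}{2i})\leq 0$ (nonpositive since $i\geq 2$ wherever $\gamma_i$ occurs) and $-\beta_i = -\tfrac{1}{h^2}(1+\tfrac{1}{2i})\leq 0$, so $C\in Z_N$, and its diagonal is $\tfrac{2}{h^2}>0$. For weak diagonal dominance one checks the interior rows $2\leq i\leq N-1$, where $\gamma_i+\beta_i = \tfrac{2}{h^2}$ equals the diagonal (equality holds), together with the boundary rows $i=1$ and $i=N$, where the missing neighbour makes the off-diagonal row sum strictly smaller than $\tfrac{2}{h^2}$. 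Adding the nonnegative diagonal matrix $G(\bu^n)D^{-2}$ leaves all off-diagonal entries untouched, so $A\in Z_N$ and weak diagonal dominance are preserved, while the diagonal entries only increase and remain $\geq 0$.

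With these three properties established, Lemma~\ref{lemma2} applies and gives $\|\bu^{n+1}\|_\infty \leq \|\bu^n\|_\infty \leq \tfrac{\pi}{2}$, which simultaneously advances the induction hypothesis and, chained over $n$, delivers $\|\bu^n\|_\infty \leq \|\bu^0\|_\infty$ for all $n$. The one delicate point — and the place where the threshold $\tfrac{\pi}{2}$ enters — is the nonnegativity of $G(\bu^n)$; this is exactly what couples the stability bound to the size constraint on the data and what forces the inductive rather than one-shot structure of the argument.
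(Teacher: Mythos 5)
Your proof is correct and follows essentially the same route as the paper: induction on $n$ with the hypothesis $\|\bu^n\|_\infty \leq \tfrac{\pi}{2}$, verification that $C + G(\bu^n)D^{-2} \in Z_N$ is weakly diagonally dominant with nonnegative diagonal, and then Lemma~\ref{lemma2} to get invertibility and the bound $\|(I+\Delta t\,M)^{-1}\|_\infty \leq 1$. You supply more explicit detail than the paper (the row-sum check and the observation that $|u|\leq\tfrac{\pi}{2}$ is exactly what makes $G(\bu^n)\geq 0$), but the argument is the same.
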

\begin{proof} We use induction. Assumme $\|\bu^n\|_\infty \leq \frac{\pi}{2}$ holds. The matrix $C$ is weakly diagonally dominant, $c_{ii} \geq 0$ for all $i$ and $C \in Z_N$. The matrix $G(\bu^n)D^{-2}$ is diagonal with positive diagonal entries. It follows that the matrix $M:= C +G(\bu^n)D^{-2}$ is weakly diagonally dominant, $m_{ii} \geq 0$ for all $i$ and $M \in Z_N$. From Lemma~\ref{lemma2} it follows that $I+\Delta t M$ is invertible and $\|(I+ \Delta t M)^{-1}\|_\infty \leq 1$ holds. Thus in \eqref{discrete} we have
\[
 \|\bu^{n+1}\|_{\infty} = \|(I+ \Delta t M)^{-1}\bu^n\|_\infty \leq \|\bu^n\|_\infty \leq \frac{\pi}{2}.
\]
\end{proof}
\ \\
\begin{remark} \rm We need the condition $\|\bu^0\|_\infty \leq \frac{\pi}{2}$, whereas for the continuous problem, for existence of a smooth solution one needs $\|u_0\|_{L^\infty(I)} \leq \pi$.  We were not able to bridge the gap between $\frac{\pi}{2}$  and $\pi$. 
\end{remark}
\begin{assumption} \label{assump:Initial_cond} \rm In the remainder we assume that $\|\bu^0\|_\infty \leq \frac{\pi}{2}$ holds.
\end{assumption}

Clearly, for the discretization error analysis we need to control the nonlinearity in the term $G(\bu^n)$. Note that in \eqref{discrete} this term is multiplied by $D^{-2}$ (with $\|D^{-2}\|_\infty= h^{-2}$). It turns out, that for satisfactory error bounds we need control of $\|D^{-\alpha}\bu^n\|_\infty$, with $\alpha > \tfrac12$. 

For the  control of $\|D^{-\alpha}\bu^n\|_\infty$ we use a stability estimate for the matrix $D^{-\alpha}C D^\alpha$. For this we first note the following. The matrix $C$ is a discretization of the differential operator $Lu:=-u_{xx}-\tfrac{1}{x}u_x$. As simple computation yields
\begin{equation} \label{s1}
   x^{2-\alpha}L(x^\alpha)= - \alpha^2 \quad \text{for all}~ \alpha \in [0,1].
\end{equation}
A discrete analogon of this property also holds.
\begin{lemma} \label{lemA}
The following holds:
\[
   D^{2-\alpha}C D^\alpha \bfone \geq - \alpha^2 \bfone \quad \text{for all}~ \alpha \in [0,1].
\]
\end{lemma}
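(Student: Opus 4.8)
The plan is to reduce the matrix statement to a single scalar inequality and then to an elementary calculus estimate. First I would evaluate the $i$-th component of $D^{2-\alpha}CD^\alpha\bfone$ using the explicit tridiagonal form of $-C$ together with $x_i=ih$. Since $(D^\alpha\bfone)_j=(jh)^\alpha$ and the left factor contributes $x_i^{2-\alpha}=(ih)^{2-\alpha}$, every power of $h$ cancels and one is left with the $h$-independent expression
\[
 (D^{2-\alpha}CD^\alpha\bfone)_i = i^{2-\alpha}\big[-(1-\tfrac{1}{2i})(i-1)^\alpha+2i^\alpha-(1+\tfrac{1}{2i})(i+1)^\alpha\big], \quad 1\le i\le N-1,
\]
which is the discrete analogue of \eqref{s1}. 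For the first row ($i=1$) this is exact once we read $(i-1)^\alpha=0^\alpha=0$, since the missing subdiagonal corresponds to the ghost value $x_0^\alpha=0$. For the last row ($i=N$) the superdiagonal is absent, which only \emph{drops} the negative term $-i^{2-\alpha}(1+\tfrac{1}{2i})(i+1)^\alpha$; hence the true value at $i=N$ is no smaller than the one given by the displayed formula, so that bound is inherited. Dividing by $i^{2-\alpha}>0$ and substituting $\epsilon:=1/i\in(0,1]$, the claim $(D^{2-\alpha}CD^\alpha\bfone)_i\ge-\alpha^2$ becomes, after clearing the positive factor $2\epsilon^2$, the scalar inequality $\Phi(\epsilon)\le0$ on $(0,1]$, where
\[
 \Phi(\epsilon):=(1+\epsilon)^{\alpha+1}+(1-\epsilon)^{\alpha+1}+(1+\epsilon)^{\alpha}+(1-\epsilon)^{\alpha}-4-2\alpha^2\epsilon^2.
\]

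Next I would observe that $\Phi(0)=\Phi'(0)=0$, so it suffices to show $\Phi''(\epsilon)\le0$ on $(0,1)$ and to pass to $\epsilon=1$ (the boundary index $i=1$) by continuity. Differentiating twice and pulling out the factor $\alpha$, the second derivative takes the form $\Phi''(\epsilon)=\alpha\big[\chi(1+\epsilon)+\chi(1-\epsilon)-2\chi(1)\big]$ with the auxiliary function $\chi(t):=(\alpha-1)t^{\alpha-2}+(\alpha+1)t^{\alpha-1}$. Thus $\Phi''\le0$ is exactly the midpoint-concavity inequality $\tfrac12[\chi(1+\epsilon)+\chi(1-\epsilon)]\le\chi(1)$. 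Setting $\rho(\epsilon):=\chi(1+\epsilon)+\chi(1-\epsilon)$ one has $\rho'(0)=0$, so it is enough to prove $\rho''(\epsilon)=\chi''(1+\epsilon)+\chi''(1-\epsilon)\le0$.

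Finally I would establish $\rho''\le0$ by a direct sign analysis. One computes $\chi''(t)=(\alpha-1)(\alpha-2)t^{\alpha-4}\big[(\alpha-3)+(\alpha+1)t\big]$, and for $\alpha\in(0,1)$ the prefactor $(\alpha-1)(\alpha-2)$ is positive. Writing $A:=2(1-\alpha)\ge0$ and $B:=\alpha+1>0$, the inequality $\rho''(\epsilon)\le0$ is equivalent to
\[
 (1+\epsilon)^{\alpha-4}(B\epsilon-A)\le(1-\epsilon)^{\alpha-4}(B\epsilon+A).
\]
The right-hand side is positive; when $B\epsilon\le A$ the left-hand side is nonpositive and the inequality is trivial, while when $B\epsilon>A$ it follows at once from $0<(1+\epsilon)^{\alpha-4}<(1-\epsilon)^{\alpha-4}$ together with $0<B\epsilon-A<B\epsilon+A$. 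The degenerate cases are immediate: for $\alpha=1$ the bound holds with equality, and for $\alpha=0$ one has $D^{2-\alpha}CD^\alpha\bfone=D^2C\bfone\ge0=-\alpha^2$ directly (the interior rows of $C$ have zero row sum and the two boundary rows have positive row sum). The real obstacle here is conceptual rather than computational: a naive consistency argument splits the error into a contribution from the discretization of $-u_{xx}$, which is favorable because $t\mapsto t^\alpha$ is concave (its fourth derivative is negative), and a contribution from the discretization of $-\tfrac1x u_x$, which is \emph{un}favorable (its third derivative is positive), so the two cannot be estimated term by term. The point of repackaging the net error as $\Phi''$ and identifying the single concave function $\chi$ is precisely to make the favorable term dominate in a transparent way that is uniform in $\alpha$ and $i$.
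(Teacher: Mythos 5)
Your proof is correct, and it reaches the paper's key scalar inequality by the same reduction but then establishes it by a genuinely different argument. The reduction step --- cancelling all powers of $h$, reading row $i=1$ exactly via the ghost value $x_0^\alpha=0$, and bounding row $i=N$ from below by the formula that retains the absent superdiagonal term --- is identical to \eqref{E4} in the paper; indeed your inequality $\Phi(\epsilon)\le 0$ is precisely the paper's $f(y)\le\alpha^2$ for the function $f$ in \eqref{deff}, multiplied through by $2y^2$ and rewritten using $(2\pm y)(1\pm y)^\alpha=(1\pm y)^\alpha+(1\pm y)^{\alpha+1}$. From there the two proofs diverge. The paper expands $(1\pm y)^\alpha$ in the binomial series and shows that in $f(y)=\alpha^2+\sum_{j\ge1}(2c_{2j+2}+c_{2j+1})y^{2j}$ every coefficient with $j\ge1$ is nonpositive for $\alpha\in[0,1]$ --- a term-by-term sign argument on a convergent series, uniform in $\alpha$ over the whole closed interval. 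You instead stay with one-variable calculus: $\Phi(0)=\Phi'(0)=0$ plus $\Phi''\le0$ on $(0,1)$, where $\Phi''\le0$ is repackaged as midpoint concavity of $\chi(t)=(\alpha-1)t^{\alpha-2}+(\alpha+1)t^{\alpha-1}$ and proved by a second round of the same trick ($\rho'(0)=0$, $\rho''\le0$) together with an explicit sign analysis of $\chi''$. I verified the identities $\Phi''(\epsilon)=\alpha\bigl[\chi(1+\epsilon)+\chi(1-\epsilon)-2\chi(1)\bigr]$ and $\chi''(t)=(\alpha-1)(\alpha-2)t^{\alpha-4}\bigl[(\alpha-3)+(\alpha+1)t\bigr]$, as well as the final two-case sign comparison; they are all correct. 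What each route buys: yours avoids infinite series and factorial coefficient identities entirely, at the price of treating $\alpha\in\{0,1\}$ as separate degenerate cases (the series argument covers them uniformly) and of two layers of differentiation bookkeeping; the paper's series argument is more compact and yields the exact expansion of $f$, which additionally shows that the constant $-\alpha^2$ is attained to leading order as $y\to0$, i.e., the discrete bound sharply mirrors \eqref{s1}. Both proofs require the same continuity step to pass from $\epsilon\in(0,1)$ to the boundary index $i=1$, i.e., $\epsilon=1$. One cosmetic point: for $\alpha=1$ your phrase ``holds with equality'' applies to rows $1\le i\le N-1$; in row $N$ the inequality is strict, which of course does not affect the conclusion.
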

\begin{proof} 
Using $D^\alpha\bfone= (x_1^\alpha, \ldots,x_N^\alpha)^T \geq 0$,  $-C = {\rm tridiag}(\gamma_i,\delta_i,\beta_i)_{1 \leq i \leq N}$ and with $\gamma_1:= \frac{1}{h^2}(1-\frac{1}{2})$, $\beta_N:= \frac{1}{h^2} (1 +\frac{1}{2N})$ we obtain, for $1 \leq i \leq N$:
\begin{equation} \label{E4} \begin{split}
 \big(-D^{2-\alpha}C D^\alpha \bfone\big)_i & \leq x_i^{2-\alpha}\big(\gamma_i x_{i-1}^\alpha +\delta_i x_i^\alpha + \beta_i x_{i+1}^\alpha \big) \\
   & = x_i^{2}\big(\gamma_i (\frac{x_{i-1}}{x_i})^\alpha +\delta_i  + \beta_i (\frac{x_{i+1}}{x_i})^\alpha \big)\\
   & = i^2 \big( (1-\tfrac{1}{2i})(1- \tfrac{1}{i})^\alpha -2 +(1+\tfrac{1}{2i})(1+\tfrac{1}{i})^\alpha\big) =f(\tfrac{1}{i}),
  \end{split}
\end{equation}
with 
\begin{equation} \label{deff} \begin{split}
 f(y) & := \tfrac{1}{y^2}\big( (1- \tfrac12 y)(1-y)^\alpha -2 + (1+\tfrac12 y)(1+y)^\alpha\big) \\
    & = \tfrac{1}{y^2}\big((1+y)^\alpha +(1-y)^\alpha -2 + \tfrac12 y ((1+y)^\alpha -(1-y)^\alpha)\big), \quad y \in (0,1].  
\end{split} \end{equation}
For $|y| < 1$ we have the convergent series
\begin{align*}
  (1+y)^\alpha & = 1+\alpha y + \frac{\alpha(\alpha-1)}{2!} y^2 + \frac{\alpha(\alpha-1)(\alpha-2)}{3!} y^3 + \ldots  \\
    & = \sum_{j=0}^\infty c_j y^j, \quad c_0=1, \quad c_j=\tfrac{1}{j!} \Pi_{m=0}^{j-1} (\alpha -m) \quad j \geq 1.
\end{align*}
This yields
\[ \begin{split}
 (1+y)^\alpha +(1-y)^\alpha -2  & = 2 \sum_{j=1}^\infty c_{2j} y^{2j} = 2 \sum_{j=0}^\infty c_{2j+2} y^{2j+2} \\
 \tfrac12 y \big((1+y)^\alpha -(1-y)^\alpha\big) &= \sum_{j=0}^\infty c_{2j+1} y^{2j+2}.
\end{split}
\]
Using this in \eqref{deff} we obtain
\[
  f(y)= \frac{1}{y^2}\left( 2 \sum_{j=0}^\infty c_{2j+2} y^{2j+2} +  \sum_{j=0}^\infty c_{2j+1} y^{2j+2} \right) =
   \alpha^2 + \sum_{j=1}^\infty (2 c_{2j+2} + c_{2j+1}) y^{2j}. 
\]
Now note, using $\alpha \in [0,1]$, 
\[
  2 c_{2j+2} + c_{2j+1}= \frac{1}{(2j+1)!}  \frac{\alpha-j}{j+1}\Pi_{m=0}^{2j} (\alpha -m)   \leq 0 \quad \text{for all}~j \geq 1,
\]
which implies $f(y) \leq \alpha^2$ for all $y \in (0,1)$. Due to continuity this also holds for $y=1$. Using this in \eqref{E4} completes the proof. 
 \end{proof}
\ \\[1ex]
We introduce the notation $\bu_\alpha^n:=D^{-\alpha} \bu^n$.
From \eqref{discrete} we obtain
\begin{equation} \label{inter}
 (I+\Delta t (D^{-\alpha}CD^\alpha+ G(\bu^n)D^{-2}))\bu_\alpha^{n+1}=  \bu_\alpha^n, \quad n \geq 0, ~\bu_\alpha^0=D^{-\alpha}\bu^0.
\end{equation}
The matrix $D^{-\alpha}CD^\alpha$ is an $M$-matrix, but \emph{not} weakly diagonally dominant. We use the property of Lemma~\ref{lemA} to derive a stability result for  $(\bu_\alpha^n)_{n \geq 0}$.
\begin{lemma} \label{lemma4}
 Take $\alpha \in [0,1)$. Let $c_\alpha \in (0,\tfrac{\pi}{2}]$ be such that $\frac{\sin(2c_\alpha)}{2c_\alpha}=\alpha^2$.  Assume $\bu^0$ satisfies $\|\bu^0\|_\infty \leq c_\alpha$. Then the following holds:
 \[
   \|\bu_\alpha^n\|_\infty \leq \|\bu_\alpha^0\|_\infty \quad \text{for all}~n \geq 0.
 \]
\end{lemma}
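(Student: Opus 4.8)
The plan is to run an induction in $n$ along the lines of the proof of Lemma~\ref{lemma3}, but now for the scaled iterates $\bu_\alpha^n=D^{-\alpha}\bu^n$ governed by the recursion \eqref{inter}. Writing $B_n:=I+\Delta t\,(D^{-\alpha}CD^\alpha+G(\bu^n)D^{-2})$, relation \eqref{inter} reads $B_n\bu_\alpha^{n+1}=\bu_\alpha^n$. The goal is to establish the contraction $\|B_n^{-1}\|_\infty\le 1$ for every $n$; the estimate $\|\bu_\alpha^{n+1}\|_\infty\le\|\bu_\alpha^n\|_\infty$ then follows at once, and induction delivers the claim.

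First I would secure the pointwise bound $\|\bu^n\|_\infty\le c_\alpha$ for all $n$. Since $c_\alpha\le\frac{\pi}{2}$, Assumption~\ref{assump:Initial_cond} is met and Lemma~\ref{lemma3} yields $\|\bu^n\|_\infty\le\|\bu^0\|_\infty\le c_\alpha$. The purpose of this bound is to control the nonlinear factor: the map $s\mapsto\frac{\sin(2s)}{2s}$ is even and decreasing in $|s|$ on $[0,\frac{\pi}{2}]$, so $|u_i^n|\le c_\alpha$ forces each diagonal entry of $G(\bu^n)$ to satisfy $\frac{\sin(2u_i^n)}{2u_i^n}\ge\frac{\sin(2c_\alpha)}{2c_\alpha}=\alpha^2$.

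The decisive step is to verify the hypotheses of the $M$-matrix criterion Lemma~\ref{lemma1} for $B_n$ with the test vector $\bfone$. Since conjugation of $C$ by the positive diagonal matrix $D^\alpha$ multiplies each off-diagonal entry by a positive number, it preserves the sign pattern, and $G(\bu^n)D^{-2}$ affects only the diagonal; hence $B_n\in Z_N$. For the strict positivity $B_n\bfone>0$ I would combine two ingredients. Using Lemma~\ref{lemA} in the form $D^{-\alpha}CD^\alpha\bfone=D^{-2}\big(D^{2-\alpha}CD^\alpha\bfone\big)\ge-\alpha^2 D^{-2}\bfone$ together with the lower bound $G(\bu^n)D^{-2}\bfone\ge\alpha^2 D^{-2}\bfone$ just derived, the two $D^{-2}$-contributions cancel entrywise, giving $\big(D^{-\alpha}CD^\alpha+G(\bu^n)D^{-2}\big)\bfone\ge 0$ and therefore $B_n\bfone\ge\bfone>0$. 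By Lemma~\ref{lemma1}, $B_n$ is an $M$-matrix, so $B_n$ is invertible with $B_n^{-1}\ge 0$.

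Finally I would reproduce the concluding chain of the proof of Lemma~\ref{lemma2}: from $B_n^{-1}\ge0$ one has $\|B_n^{-1}\|_\infty=\|B_n^{-1}\bfone\|_\infty$, and from $B_n\bfone\ge\bfone$ together with $B_n^{-1}\ge0$ one gets $0\le B_n^{-1}\bfone\le B_n^{-1}B_n\bfone=\bfone$, whence $\|B_n^{-1}\|_\infty\le1$. Applying this to $B_n\bu_\alpha^{n+1}=\bu_\alpha^n$ closes the induction. I expect the main obstacle to be precisely the positivity step $B_n\bfone\ge\bfone$: the conjugated matrix $D^{-\alpha}CD^\alpha$ is no longer weakly diagonally dominant, so Lemma~\ref{lemma2} cannot be invoked directly, and the whole argument hinges on the exact cancellation between the $-\alpha^2 D^{-2}\bfone$ deficit produced by Lemma~\ref{lemA} and the $\alpha^2 D^{-2}\bfone$ surplus produced by the nonlinear term under the sharp constraint $\|\bu^0\|_\infty\le c_\alpha$. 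This is the discrete counterpart of the identity \eqref{s1}.
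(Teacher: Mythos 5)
Your proof is correct and follows essentially the same route as the paper's: Lemma~\ref{lemma3} gives $\|\bu^n\|_\infty \le c_\alpha$, hence $G(\bu^n) \ge \alpha^2 I$, and Lemma~\ref{lemA} supplies exactly the compensating bound $D^{-\alpha}CD^\alpha\bfone \ge -\alpha^2 D^{-2}\bfone$ so that the $M$-matrix machinery yields $\|(I+\Delta t\, M_\alpha)^{-1}\|_\infty \le 1$ and the contraction in \eqref{inter}. The only cosmetic difference is that the paper notes $M_\alpha\bfone \ge 0$ makes $M_\alpha$ weakly diagonally dominant and then cites Lemma~\ref{lemma2}, whereas you bypass Lemma~\ref{lemma2} by applying Lemma~\ref{lemma1} to $B_n$ directly and re-deriving the tail of that proof.
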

\begin{proof}
 We use similar arguments as in the proof of Lemma~\ref{lemma3}. Take $\bu^0$ such that $\|\bu^0\|_\infty \leq c_\alpha$. From Lemma~\ref{lemma3} we then have $\|\bu^n\|_\infty \leq c_\alpha$ for all $n$. Since $\frac{\sin (2y)}{2y} \geq \alpha^2$ for all $y \in [-c_\alpha,c_\alpha]$ it follows that
 $G(\bu^n) \geq \alpha^2 I$. Hence, $M_\alpha:=D^{-\alpha}CD^\alpha+ G(\bu^n)D^{-2} \geq D^{-\alpha}CD^\alpha +\alpha^2 D^{-2}$. Note that $M_\alpha \in Z_N$ and $(M_\alpha)_{ii} \geq  0$ holds. Using Lemma~\ref{lemA} we get $M_\alpha \bfone \geq D^{-2}( D^{2-\alpha}C D^\alpha \bfone + \alpha^2 \bfone) \geq 0$, and thus $M_\alpha$ is weakly diagonally dominant.   From Lemma~\ref{lemma2} it follows that $\|(I+ \Delta t M_\alpha)^{-1}\|_\infty \leq 1$ holds. Thus in \eqref{inter} we have
\[
 \|\bu_\alpha^{n+1}\|_{\infty} = \|(I+ \Delta t M_\alpha)^{-1}\bu_\alpha^n\|_\infty \leq \| \bu_\alpha^n\|_\infty \leq  \|\bu_\alpha^0\|_\infty.
\]
\end{proof}
\ \\[1ex]
Note that the result above can not be extended to $\alpha=1$. 

Besides the maximum norm we use two other vector norms in the error analysis below. The Euclidean scalar product on $\R^N$ is denoted by $\langle \cdot,\cdot\rangle$ and the corresponding scaled norm by $\|\bv\|_{2,h}^2:=h \langle \bv,\bv\rangle = h\sum_{i=1}^N v_i^2$. We also use the diagonally scaled Euclidean norm $\|\bv\|_{D,h}:=\|D^\frac12 \bv\|_{2,h}$, i.e., $\|\bv\|_{D,h}^2= h \langle D\bv, \bv\rangle$. The matrix norm corresponding  to $\|\cdot\|_{2,h}$ is denoted by $\|\cdot\|_2$. Note that the diagonally weighted scalar product is natural in  the sense that $C$ is symmetric with respect to this scalar product. It is the discrete analogon of the weighted $L^2$ scalar product $(\cdot,\cdot)_w$ used in \eqref{L2weighted}.

One easily checks that the matrix $DC$ is symmetric, which is the discrete analogon of the symmetry, in the $L^2$ scalar product of $u \to -xu_{xx}-u_x$. We finally derive a stability result in the Euclidean norm. 
 \begin{lemma} \label{lemma5}
  The following holds
  \begin{equation} \label{bound1}
    \|\big(I+ \Delta t(D^\frac12 C D^{- \frac12} +G(\bu^n)D^{-2})\big)^{-1}\|_2 \leq 1.
  \end{equation}
 \end{lemma}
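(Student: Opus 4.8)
The plan is to show that the matrix inside the inverse, written as $B:=I+\Delta t\,M$ with $M:=D^{\frac12}CD^{-\frac12}+G(\bu^n)D^{-2}$, is \emph{symmetric positive semidefinite} with respect to the standard Euclidean inner product. First I would observe that the induced matrix norm $\|\cdot\|_2$ is nothing but the usual spectral norm, since the factor $h$ in $\|\bv\|_{2,h}^2=h\langle\bv,\bv\rangle$ cancels in the ratio $\|A\bv\|_{2,h}/\|\bv\|_{2,h}$. Consequently, once $B$ is shown to be symmetric with all eigenvalues $\ge 1$, its inverse $B^{-1}$ is symmetric with eigenvalues in $(0,1]$, so $\|B^{-1}\|_2=\max_i\,(1+\Delta t\,\lambda_i(M))^{-1}\le 1$, which is exactly \eqref{bound1}. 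Thus the whole task reduces to verifying that $M$ is symmetric and positive semidefinite.

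I would treat the two summands of $M$ separately. The term $G(\bu^n)D^{-2}$ is a product of two diagonal matrices, hence symmetric; moreover, under Assumption~\ref{assump:Initial_cond} together with Lemma~\ref{lemma3} we have $\|\bu^n\|_\infty\le\frac{\pi}{2}$, so every diagonal entry $\frac{\sin(2u_i^n)}{2u_i^n}$ of $G(\bu^n)$ is nonnegative, while $D^{-2}$ has positive diagonal. Hence $G(\bu^n)D^{-2}$ is symmetric and positive semidefinite.

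The crucial summand is $D^{\frac12}CD^{-\frac12}$. The key point is that, since $DC$ is symmetric, one has $C^T=DCD^{-1}$, and therefore
\[
(D^{\frac12}CD^{-\frac12})^T=D^{-\frac12}C^TD^{\frac12}=D^{-\frac12}(DCD^{-1})D^{\frac12}=D^{\frac12}CD^{-\frac12},
\]
so that $D^{\frac12}CD^{-\frac12}$ is itself symmetric. For positive semidefiniteness I would substitute $\bw=D^{\frac12}\bv$ to get $\langle D^{\frac12}CD^{-\frac12}\bw,\bw\rangle=\langle DC\bv,\bv\rangle$, which reduces the claim to positive semidefiniteness of the symmetric matrix $DC$. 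This in turn follows from the $M$-matrix structure already exploited in the previous lemmas: $DC\in Z_N$ because $x_i>0$ and $C\in Z_N$, and scaling row $i$ of $C$ by $x_i>0$ preserves both weak diagonal dominance and the sign of the diagonal. Thus $DC$ is a symmetric, weakly diagonally dominant matrix with nonnegative diagonal, and its eigenvalues are real and lie in the closed right half-plane (e.g. by Gershgorin), hence are nonnegative.

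I expect the only genuinely delicate step to be the symmetry identity for $D^{\frac12}CD^{-\frac12}$, which hinges on translating ``$DC$ symmetric'' into $C^T=DCD^{-1}$; everything else is bookkeeping built on the $M$-matrix facts collected in Lemmas~\ref{lemma1}--\ref{lemma3}. Combining the two pieces yields $M=M^T\ge 0$, so that $B=I+\Delta t\,M$ is symmetric with spectrum contained in $[1,\infty)$, and the bound \eqref{bound1} follows at once.
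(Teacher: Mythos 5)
Your proof is correct, and its core is the same as the paper's: show that $M:=D^{\frac12}CD^{-\frac12}+G(\bu^n)D^{-2}$ is symmetric positive (semi)definite by exploiting the symmetry of $DC$ together with a congruence by $D^{\frac12}$, and then read off \eqref{bound1} from the spectrum of $I+\Delta t\,M$. The difference is how definiteness is justified. The paper treats the sum as a whole: $C+G(\bu^n)D^{-2}$ is an $M$-matrix (as in the proof of Lemma~\ref{lemma3}), so $D\big(C+G(\bu^n)D^{-2}\big)=DC+G(\bu^n)D^{-1}$ is a symmetric $M$-matrix, hence positive definite by a citation to Fiedler; conjugating by $D^{-\frac12}$ then gives that $M$ is symmetric positive definite, and the Rayleigh quotient of $I+\Delta t\,M$ is at least $1$. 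You instead split $M$ into its two summands: $G(\bu^n)D^{-2}$ is a nonnegative diagonal matrix (correctly anchored in Assumption~\ref{assump:Initial_cond} and Lemma~\ref{lemma3}), and $D^{\frac12}CD^{-\frac12}$ is shown symmetric via $C^T=DCD^{-1}$ and reduced by the substitution $\bw=D^{\frac12}\bv$ to $DC$, whose positive semidefiniteness you get from symmetry, weak diagonal dominance (preserved under row scaling by $x_i>0$) and Gershgorin. Your variant buys two things: the key definiteness step becomes elementary and self-contained, with no appeal to the theorem that symmetric $M$-matrices are positive definite; and because you only need positive \emph{semi}definiteness, you sidestep the invertibility question implicit in calling $C+G(\bu^n)D^{-2}$ an $M$-matrix (relevant, e.g., if a diagonal entry of $G(\bu^n)$ vanishes because $|u_i^n|=\tfrac{\pi}{2}$). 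What the paper's route buys is uniformity: the same $M$-matrix machinery of Lemmas~\ref{lemma1}--\ref{lemma2} drives all of Section~\ref{sec:Stability}, so its proof of Lemma~\ref{lemma5} reuses facts already on the table rather than introducing a new argument.
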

 \begin{proof}
The matrix  $C+G(\bu^n)D^{-2}$ is an $M$-matrix, cf. proof of Lemma~\ref{lemma3}. From this and the symmetry of $DC$ we get that $D\big(C+G(\bu^n)D^{-2})=DC+G(\bu^n)D^{-1}$ is a symmetric $M$-matrix. Thus this matrix is positive definite, cf. 
\cite[Theorem 5.1]{Fiedler}. 
Hence $D^{-\frac12}\big(DC+G(\bu^n)D^{-1}\big)D^{-\frac12}= D^\frac12 C D^{- \frac12} +G(\bu^n)D^{-2}$ is symmetric positive definite.  From this it follows that $\min_{\bx \in \R^N, \bx \neq 0}\frac{\langle (I+ \Delta t(D^\frac12 C D^{- \frac12} +G(\bu^n)D^{-2})) \bx,\bx\rangle}{\langle \bx, \bx \rangle} \geq 1$  holds, which implies the estimate \eqref{bound1}. 
\end{proof}

\section{Discretization error bounds} \label{Secdiscrerror}
For linearization we use the following elementary estimate.
\begin{lemma} \label{lemma8}
 Define $g(y):=\frac{\sin(2y)}{2y}$, $y \neq 0$, $g(0)=1$. The following holds:
 \[ 
  |g(y) - g(z)| \leq \tfrac43 \max \{|y|,|z|\} |y-z| \quad \text{for all}~ y,z \in \R.
 \]
\end{lemma}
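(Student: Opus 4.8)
The plan is to reduce the two-variable estimate to a pointwise bound on the derivative $g'$ and then to verify that bound by a clean monotonicity argument. First I would record that $g$ extends to a smooth (in fact entire) even function on $\R$, with $g(0)=1$ and $g'(0)=0$. Consequently, for $y\neq z$ the mean value theorem yields some $\xi$ strictly between $y$ and $z$ with $g(y)-g(z)=g'(\xi)(y-z)$, and since $\xi$ lies between $y$ and $z$ we have $|\xi|\le \max\{|y|,|z|\}$ regardless of the signs of $y,z$. Thus it suffices to prove the single-variable bound
\[
  |g'(\xi)|\le \tfrac43|\xi| \quad \text{for all } \xi\in\R,
\]
because then $|g(y)-g(z)|=|g'(\xi)|\,|y-z|\le \tfrac43|\xi|\,|y-z|\le \tfrac43\max\{|y|,|z|\}\,|y-z|$, which is exactly the claim (the case $y=z$ being trivial).

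Next I would compute $g'(\xi)=\dfrac{2\xi\cos(2\xi)-\sin(2\xi)}{2\xi^2}$ for $\xi\neq 0$. Substituting $t=2\xi$ this becomes $g'(\xi)=\dfrac{2\bigl(t\cos t-\sin t\bigr)}{t^2}$, and the target $|g'(\xi)|\le \tfrac43|\xi|=\tfrac23|t|$ is then equivalent to the key inequality
\[
  |t\cos t-\sin t|\le \tfrac13|t|^3 \quad \text{for all } t\in\R.
\]
The constant is sharp: the expansion $t\cos t-\sin t=-\tfrac13 t^3+\tfrac{1}{30}t^5-\cdots$ shows the factor $\tfrac43$ cannot be improved near $0$.

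To prove the key inequality I would exploit that $\phi(t):=t\cos t-\sin t$ is odd, so it suffices to treat $t\ge 0$, and establish the two one-sided bounds separately by monotonicity. For the upper bound set $U(t):=\tfrac13 t^3-\phi(t)$; then $U(0)=0$ and $U'(t)=t^2+t\sin t=t(t+\sin t)\ge 0$ for $t\ge 0$, because $t+\sin t\ge 0$ there. For the lower bound set $W(t):=\phi(t)+\tfrac13 t^3$; then $W(0)=0$ and $W'(t)=t^2-t\sin t=t(t-\sin t)\ge 0$ for $t\ge 0$ by the elementary inequality $\sin t\le t$. Since both $U$ and $W$ vanish at $0$ and are nondecreasing on $[0,\infty)$, we obtain $-\tfrac13 t^3\le \phi(t)\le \tfrac13 t^3$ for $t\ge 0$, hence $|\phi(t)|\le \tfrac13|t|^3$ for all $t$ by oddness.

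I expect the main obstacle to be the \emph{global} validity of the key inequality rather than its behavior near the origin: because $\phi$ oscillates and changes sign infinitely often while $\tfrac13 t^3$ grows monotonically, a direct sign analysis of $\phi$ would be messy. The monotonicity trick above circumvents this entirely, since $U'$ and $W'$ factor as $t$ times a manifestly nonnegative quantity on $[0,\infty)$; the only external inputs are the textbook bounds $\sin t\le t$ and $t+\sin t\ge 0$ for $t\ge 0$.
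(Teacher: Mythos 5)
Your proof is correct and follows essentially the same route as the paper's: both reduce the claim to the pointwise derivative bound $|g'(\xi)|\le\tfrac43|\xi|$ and then pass to differences by a mean-value argument (the paper uses the integral form $g(y)-g(z)=\int_0^1 g'\bigl(z+t(y-z)\bigr)\,dt\,(y-z)$ where you use the classical mean value theorem together with $|\xi|\le\max\{|y|,|z|\}$ --- an immaterial difference). The one substantive addition is that you actually prove the derivative bound, via the inequality $|t\cos t-\sin t|\le\tfrac13|t|^3$ and the monotonicity of $U$ and $W$, whereas the paper dismisses it as ``elementary computation''; your argument for that step is correct, and your remark on the sharpness of the constant $\tfrac43$ near the origin is accurate.
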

\begin{proof}
Elementary computation yields $|g'(y)| \leq \tfrac43 |y|$ for all $y \in \R$. Thus we obtain
\[ \begin{split}
 |g(y) - g(z)| & = \left| \int_0^1 g'(z+t(y-z)) \, dt (y-z)\right| \leq \tfrac43 \int_0^1 |z+t(y-z)| \, dt |y-z| \\
  & \leq \tfrac43 \int_0^1 (1-t)|z|+ t|y| \, dt |y-z| \leq \tfrac43 \max \{|y|,|z|\} |y-z|.
\end{split} \]
\end{proof}
\ \\[1ex]
For a sufficiently smooth function $v(x,t)$, $x \in I$, $t \in [0,T]$ we introduce a corresponding grid function $\bv(t):=\left(v(x_1,t),\ldots, v(x_N,t)\right)^T \in \R^N$. The grid functions for $v_t(\cdot,t)$, $v_x(\cdot,t)$, $v_{xx}(\cdot,t)$ are denoted by $\bv_t(t)$, $\bv_x(t)$ and $\bv_{xx}(t)$, respectively. Let $u$ be the solution of \eqref{PDE}, which is assumed to be sufficiently smooth. From Taylor expansion we have 
\begin{align*}
 \bu_t(t_{n+1}) & = \frac{\bu(t_{n+1}) - \bu(t_n)}{\Delta t} + \Delta t  \, \be_{\partial t}(t_{n+1}), \quad  \be_{\partial t}(t_{n+1})=\tfrac12 \left(u_{tt}(x_i,\xi_{n+1})\right)_{1 \leq i \leq N}, \\
 \bu_{x}(t_{n+1}) & = B \bu(t_{n+1}) + h^2 \be_{\partial x}(t_{n+1}), \quad \be_{\partial x}(t_{n+1})= - \tfrac16 \left( \frac{\partial^3 u}{\partial x^3}(\xi_i,t_{n+1})\right)_{1 \leq i \leq N}, \\
 \bu_{xx}(t_{n+1}) &= -A \bu(t_{n+1}) + h^2 \be_{\partial^2 x}(t_{n+1}), ~\be_{\partial^2 x}(t_{n+1})= -\tfrac{1}{12}\left(\frac{\partial^4 u}{\partial x^4}(\eta_i,t_{n+1})\right)_{1 \leq i \leq N}.
\end{align*}
Thus we get  
\[
  \bu_t(t_{n+1}) = \bu_{xx}(t_{n+1}) + D^{-1}\bu_{x}(t_{n+1}) - G(\bu(t_{n+1}))D^{-2}\bu(t_{n+1})
\]
and  
\begin{equation} \label{erreq1} \begin{split}
 \frac{\bu(t_{n+1})- \bu(t_n)}{\Delta t}  & = - C \bu(t_{n+1}) - G(\bu(t_{n+1}))D^{-2}\bu(t_{n+1})  \\  & \quad  -\Delta t \,\be_{\partial t}(t_{n+1}) +   h^2 \be_{\partial^2 x}(t_{n+1})+h^2 D^{-1}\be_{\partial x}(t_{n+1}) .
 \end{split}
\end{equation}
The discretization error is denoted by $\be^n:=\bu(t_n)-\bu^n$, with $\bu^n$ the solution  of \eqref{discrete}. We then obtain the following recursion for the discretization error, with $\be^0:=0$:
\begin{equation} \label{erreq2} \begin{split}
 \frac{\be^{n+1}- \be^n}{\Delta t}  & = - C\be^{n+1} - \left(G(\bu(t_{n+1}))D^{-2}\bu(t_{n+1})- G(\bu^n)D^{-2}\bu^{n+1}\right)  \\  & \quad  -\Delta t \,\be_{\partial t}(t_{n+1}) +   h^2 \be_{\partial^2 x}(t_{n+1})+h^2 D^{-1}\be_{\partial x}(t_{n+1}), \quad 0 \leq n \leq M-1.
 \end{split}
\end{equation}
 In \cite{Chang1991} it is shown that if the initial condition satisfies $u_0 \in C^{1+\gamma}(I)$ for some $\gamma\geq 0$ and $\|u_0\|_\infty \leq \pi$, then the solution $u$ of \eqref{PDE} has  regularity $u \in C^\gamma(I\times [0,T)])\cap C^{2+\gamma}(I\times (0,T))$. In the main theorem below, we assume the initial condition $u_0$ to be smooth enough such that the $L^\infty$ norms of derivatives of $u$ that appear are finite. 

\begin{theorem} \label{mainthm}    Take $\alpha  \in (\tfrac12, 1)$ and assume $\|\bu^0\|_\infty \leq c_\alpha$, with $c_\alpha$ as in Lemma~\ref{lemma4}.
Define $d_\alpha:=\|D^{-\alpha}\bu^0\|_\infty$.  
For the error $\be^n$ the following holds:
\begin{equation} \label{optboound}
  \|\be^n\|_{D,h} \leq   c \big( \Delta t + h^2 |\ln h|^\frac12), \quad 1 \leq n \leq M,
\end{equation}
with a constant $c$ that depends only on $\|\frac{\partial^j u}{\partial  x^j}\|_{L^\infty(I\times[0,T])}$,  $0 \leq j \leq 4$, $\| \frac{\partial^j u}{\partial t^j}\|_{L^\infty(I\times[0,T])}$,  $0 \leq j \leq 2$,  $T$, $\alpha$, $c_\alpha$ and $d_\alpha$. The dependence of $c$ on these quantities can be deduced from the proof.
\end{theorem}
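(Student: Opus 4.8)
\section*{Proof plan}

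The starting point is the error recursion \eqref{erreq2}. I would run a \emph{discrete energy estimate} in the norm $\|\cdot\|_{D,h}$, using as scalar product $(\bx,\by)_{D,h}:=h\langle D\bx,\by\rangle$, and exploiting the coercivity hidden in Lemma~\ref{lemma5}: since $DC$ is symmetric positive definite (symmetric $M$-matrix, cf. the proof of Lemma~\ref{lemma5}), the form $a(\bv):=h\langle DC\bv,\bv\rangle$ is a genuine discrete analogue of the weighted Dirichlet seminorm $\int_0^1 x v_x^2\,dx$. First I would move the implicit nonlinear term to the stability operator and split the nonlinear difference in \eqref{erreq2} as $G(\bu^n)D^{-2}\be^{n+1}+\btau_A^{n+1}+\btau_B^{n+1}$, where $\btau_A^{n+1}=(G(\bu(t_{n+1}))-G(\bu(t_n)))D^{-2}\bu(t_{n+1})$ is the \emph{temporal} linearization part and $\btau_B^{n+1}=(G(\bu(t_n))-G(\bu^n))D^{-2}\bu(t_{n+1})$ is the \emph{feedback} part depending on $\be^n$. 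Testing the resulting identity with $\be^{n+1}$ in $(\cdot,\cdot)_{D,h}$, using $(\be^{n+1}-\be^n,\be^{n+1})_{D,h}\ge\tfrac12(\|\be^{n+1}\|_{D,h}^2-\|\be^n\|_{D,h}^2)$ and discarding the nonnegative term $h\langle G(\bu^n)D^{-1}\be^{n+1},\be^{n+1}\rangle\ge0$, yields
\[
 \tfrac12\big(\|\be^{n+1}\|_{D,h}^2-\|\be^n\|_{D,h}^2\big)+\Delta t\,a(\be^{n+1})\le \Delta t\,\big|(\btau_A^{n+1}+\btau_B^{n+1}+\btau^{n+1},\be^{n+1})_{D,h}\big|,
\]
with $\btau^{n+1}=-\Delta t\,\be_{\partial t}+h^2\be_{\partial^2 x}+h^2 D^{-1}\be_{\partial x}$ the truncation error.

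Next I would estimate the consistency contributions in $\|\cdot\|_{D,h}$ and pair them with $\be^{n+1}$ by Cauchy--Schwarz and Young. The weight $x_i$ built into $\|\cdot\|_{D,h}$ tames the three pieces of $\btau^{n+1}$: the $-\Delta t\,\be_{\partial t}$ and $h^2\be_{\partial^2 x}$ terms give $O(\Delta t)$ and $O(h^2)$ (for the temporal term one uses $u(0,t)=u_t(0,t)=0$, so $u_{tt}(\cdot,t)=O(x)$), while the singular first-derivative term $h^2 D^{-1}\be_{\partial x}$ produces exactly the logarithm: $\|h^2 D^{-1}\be_{\partial x}\|_{D,h}^2\sim h^4\,h\sum_i x_i^{-1}=h^4\sum_i\frac1i\sim h^4|\ln h|$, i.e.\ $O(h^2|\ln h|^{\frac12})$. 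For $\btau_A^{n+1}$, Lemma~\ref{lemma8} applied with \emph{both} arguments exact gives the prefactor $\max\{|u(x_i,t_{n+1})|,|u(x_i,t_n)|\}\le Cx_i$ and the increment $\le C\Delta t\,x_i$ (again using $u_t(0,\cdot)=0$); together with $|x_i^{-2}u(x_i,t_{n+1})|\le Cx_i^{-1}$ this yields $(\btau_A^{n+1})_i=O(\Delta t\,x_i)$ and $\|\btau_A^{n+1}\|_{D,h}=O(\Delta t)$. So both consistency parts are controlled, after Young, by $O((\Delta t+h^2|\ln h|^{\frac12})^2)$ plus a $\tfrac{\Delta t}{2}\|\be^{n+1}\|_{D,h}^2$ remainder.

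The main obstacle is the feedback term $\btau_B^{n+1}$, and it is precisely here that Lemma~\ref{lemma4} is indispensable. Lemma~\ref{lemma8} gives the prefactor $\max\{|u(x_i,t_n)|,|u_i^n|\}$; the exact value is $\le Cx_i$, but for the \emph{discrete} value only the Hölder-type bound $|u_i^n|\le d_\alpha x_i^\alpha$ is available, with $\alpha<1$. Hence $(\btau_B^{n+1})_i\le C x_i^{\alpha-1}|e_i^n|$ and, pairing with $\be^{n+1}$ and applying Young,
\[
 \big|(\btau_B^{n+1},\be^{n+1})_{D,h}\big|\le C\,h\sum_i x_i^{\alpha}|e_i^n||e_i^{n+1}|\le \tfrac{C}{2}\,h\sum_i x_i^{\alpha}\big(|e_i^n|^2+|e_i^{n+1}|^2\big).
\]
Because $\alpha<1$ one has $x_i^\alpha\ge x_i$, so this quantity is \emph{not} dominated by $\|\cdot\|_{D,h}^2$: this is the discrete manifestation of the gap between the Lipschitz decay $u(x,t)=O(x)$ of the continuous solution and the merely Hölder decay $O(x^\alpha)$ that Lemma~\ref{lemma4} delivers for the scheme. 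The device I would use to close this gap is a \emph{discrete weighted Ehrling (Hardy-type) inequality}: for all $\bv$ with $v_0=v_{N+1}=0$ and every $\epsilon>0$,
\[
 h\sum_i x_i^{\alpha}v_i^2\le \epsilon\,a(\bv)+C_\epsilon\,h\sum_i x_i v_i^2 ,
\]
with $C_\epsilon$ independent of $h$ (the restriction $\alpha>\tfrac12$ is what keeps the accompanying sums $h\sum_i x_i^{2\alpha-1}$ uniformly bounded and the Ehrling constant $h$-independent). I expect establishing this inequality, as the discrete counterpart of the continuous embedding of the weighted $H^1$ space into $L^2_{x^\alpha}$ and transferring it with an $h$-uniform constant, to be the genuinely delicate point of the proof.

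With this inequality in hand the argument closes routinely. Choosing $\epsilon$ small enough that $\tfrac{C}{2}\epsilon<1$ lets me absorb the $|e_i^{n+1}|^2$ part into the coercivity term $\Delta t\,a(\be^{n+1})$, while the $|e_i^n|^2$ part and the leftover $\tfrac{\Delta t}{2}\|\be^{n+1}\|_{D,h}^2$ remainders are of Gronwall type. Summing over $n=0,\dots,m-1$ with $\be^0=0$, telescoping the $\|\cdot\|_{D,h}^2$ differences, and reindexing so the absorbed $\epsilon$-Dirichlet feedback at level $n$ is covered by the summed coercivity $\sum_n\Delta t\,a(\be^{n+1})$ (the endpoint $a(\be^0)=0$), I arrive at
\[
 \|\be^m\|_{D,h}^2\le C\big(\Delta t+h^2|\ln h|^{\tfrac12}\big)^2+C_\epsilon\sum_{n=0}^{m-1}\Delta t\,\|\be^n\|_{D,h}^2 .
\]
A discrete Gronwall lemma then gives $\|\be^m\|_{D,h}^2\le e^{C_\epsilon T}\,C(\Delta t+h^2|\ln h|^{\tfrac12})^2$, which is \eqref{optboound}; tracking the constants shows $c$ depends only on $T$, $\alpha$, $c_\alpha$, $d_\alpha$ and the stated $L^\infty$-norms of derivatives of $u$, since those enter through the bounds $|u|\le Cx$, $|u_t|\le Cx$, the truncation remainders, and the Ehrling constant $C_\epsilon$.
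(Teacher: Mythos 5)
Your proposal is correct in outline, but it takes a genuinely different route from the paper. The paper never tests the error equation: it multiplies \eqref{erreq2} by $D^{\frac12}$, moves the implicit linearized part ($E_3$) into the stability operator, invokes the operator-norm bound of Lemma~\ref{lemma5}, and then estimates the two remaining linearization terms purely norm-to-norm, ending in the scalar recursion $\|\be^{n+1}\|_{D,h}\le(1+c\Delta t)\|\be^n\|_{D,h}+c\,\Delta t(\Delta t+h^2|\ln h|^{\frac12})$ plus Gronwall. The crux there is the bound \eqref{R5} for the feedback term $E_2$, obtained by the weight splitting $x_i^{-\alpha}\max\{|u(x_i,t_n)|,|u_i^n|\}\cdot x_i^{\frac12}|e_i^n|\cdot x_i^{\alpha-2}|u(x_i,t_{n+1})|$ together with the summability $h\sum_i x_i^{2(\alpha-1)}\le 1/(2\alpha-1)$ --- this is exactly where $\alpha>\tfrac12$ enters the paper. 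You instead keep the variational structure: testing with $\be^{n+1}$ in $(\cdot,\cdot)_{D,h}$ retains the Dirichlet form $a(\be^{n+1})=h\langle DC\be^{n+1},\be^{n+1}\rangle$ as usable coercivity, and the feedback term is absorbed into it via a discrete Hardy/Ehrling inequality. What this buys: in a pairing the troublesome weight appears only once ($x_i^{\alpha}$, via $x_i\cdot x_i^{\alpha-1}$), rather than effectively squared as in a norm bound, and Cauchy--Schwarz applies cleanly to pairings; note that the paper's step \eqref{estE2} is precisely a Cauchy--Schwarz of pairing type used as a norm-to-norm estimate, which is the most delicate point of the paper's argument and the one your formulation sidesteps. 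What it costs: you need coercivity, a telescoping/summation Gronwall argument, and the auxiliary Hardy-type inequality, none of which the paper requires.

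Three remarks needed to finish your proof. First, the Ehrling inequality you flag but do not prove is indeed true with $h$-uniform constants, so this is a completable gap rather than a fatal one: since $DC=\frac{1}{h^2}{\rm tridiag}(-x_{i-1/2},\,2x_i,\,-x_{i+1/2})$ with $x_{i\pm1/2}=(i\pm\tfrac12)h$, one has $a(\bv)=\frac1h\sum_{i=0}^{N}x_{i+1/2}(v_{i+1}-v_i)^2$ (with $v_0=v_{N+1}=0$); writing $v_i=-\sum_{j\ge i}(v_{j+1}-v_j)$ and applying Cauchy--Schwarz gives $v_i^2\le a(\bv)\,h\sum_{j\ge i}x_{j+1/2}^{-1}\le a(\bv)\big(\ln(1/x_i)+c\big)$, whence $h\sum_{x_i\le\delta}x_i^{\alpha}v_i^2\le c\,\delta^{1+\alpha}\ln(e/\delta)\,a(\bv)$, while $h\sum_{x_i>\delta}x_i^{\alpha}v_i^2\le\delta^{\alpha-1}\,h\sum_i x_iv_i^2$; choosing $\delta$ small yields your inequality. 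Second, this inequality holds for every $\alpha\in[0,1)$, so your attribution of the restriction $\alpha>\tfrac12$ to the boundedness of $h\sum_i x_i^{2\alpha-1}$ is imprecise (that sum only needs $\alpha>0$); in your scheme the restriction $\alpha>\tfrac12$ is in fact not needed --- it is an artifact of the paper's norm-wise treatment of $E_2$ --- and since the theorem only asserts the result for $\alpha\in(\tfrac12,1)$, this makes your argument stronger, not wrong. Third, the refinements $u_{tt}(\cdot,t)=O(x)$ and $(\btau_A^{n+1})_i=O(\Delta t\,x_i)$ are unnecessary (the unweighted bounds $\|D^{\frac12}\be_{\partial t}\|_{2,h}\le\|\be_{\partial t}\|_\infty$ and $(\btau_A^{n+1})_i=O(\Delta t)$ suffice, as in the paper) and would silently introduce a dependence on mixed derivatives such as $u_{tx}$, $u_{ttx}$, which are not among the quantities the constant $c$ in the theorem is allowed to depend on; simply drop these refinements.
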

\begin{proof} We use $c$ for a (varying) constant that depends only on the quantities mentioned above. 
From \eqref{erreq2} we obtain, after multiplication with $D^\frac12$
\[ \begin{split}
  & (I+\Delta t D^\frac12C D^{-\frac12})D^\frac12\be^{n+1} \\ & =D^{\frac12}\be^n - \Delta t\, E    -(\Delta t)^2 \,D^\frac12 \be_{\partial t}(t_{n+1}) +  \Delta t\,  h^2 D^\frac12\be_{\partial^2 x}(t_{n+1})+ \Delta t\, h^2 D^{-\frac12}\be_{\partial x}(t_{n+1}), \\
  & E:= \left(G(\bu(t_{n+1}))D^{-\frac32}\bu(t_{n+1})- G(\bu^n)D^{-\frac32}\bu^{n+1}\right).
\end{split} \]
For linearization of the term $E$ we use the splitting
\begin{equation} \label{splitE}
 \begin{split}
  E & = G(\bu(t_{n+1}))D^{-\frac32}\bu(t_{n+1})- G(\bu(t_n))D^{-\frac32}\bu(t_{n+1}) \\
     & + G(\bu(t_n))D^{-\frac32}\bu(t_{n+1})- G(\bu^n)D^{-\frac32}\bu(t_{n+1}) \\
     & + G(\bu^n)D^{-\frac32}\bu(t_{n+1})- G(\bu^n)D^{-\frac32}\bu^{n+1} =:E_1+E_2+E_3.
 \end{split}
\end{equation}
The term $E_3$ is shifted to the left hand side. We thus get  
\begin{equation} \label{erreq8} \begin{split}
  & \big(I+\Delta t (D^\frac12C D^{-\frac12} +G(\bu^n)D^{-2})\big)D^{\frac12}\be^{n+1} \\ & =D^{\frac12}\be^n - \Delta t\, (E_1+E_2)  \\ &   -(\Delta t)^2 \,D^\frac12 \be_{\partial t}(t_{n+1}) +  \Delta t\,  h^2 D^\frac12\be_{\partial^2 x}(t_{n+1})+ \Delta t\, h^2 D^{-\frac12}\be_{\partial x}(t_{n+1}).
\end{split} \end{equation}
We  consider the term $E_1$. Note that the $\|\cdot\|_{2,h}$ matrix norm is scaling invariant. Replacing it by $\|\cdot\|_2$ we obtain
\[
 \|E_1\|_{2,h} \leq \|\big(G(\bu(t_{n+1}))- G(\bu(t_n))\big)D^{-\frac12}\|_2 \|D^{-1}\bu(t_{n+1})\|_{2,h}.
\]
Note that $\|\bv\|_{D,h}\leq \|\bv\|_{2,h} \leq \|\bv\|_\infty$ holds for all $\bv \in \R^N$. 
For the continuous solution $\bu(t_n)$ we have
\begin{equation} \label{erreq4} \begin{split}
  \|D^{-1}\bu(t_n)\|_{\infty} & = \max_{1 \leq i \leq N} \left| \frac{u(x_i,t_n)}{x_i}\right| \\ & =\max_{1 \leq i \leq N} \frac{1}{x_i} \left|
  \int_0^{x_i} u_x(s,t_n)\, ds\right| \leq \|\frac{\partial u}{\partial  x}\|_{L^\infty(I\times[0,T])}=c_1.
\end{split} \end{equation}
Using $x_i^{-\frac12} \leq x_i^{-1}$,  Lemma~\ref{lemma8} and  \eqref{erreq4} we obtain
\[ \begin{split}
 & \|\big(G(\bu(t_{n+1})) - G(\bu(t_{n}))\big)D^{-\frac12}\|_2  \leq  \max_{1 \leq i \leq N} \frac{1}{x_i} |g(u(x_i,t_{n+1}))-g(u(x_i,t_{n}))| \\ &  \leq \tfrac43 c_1 \max_{1 \leq i \leq N} |u(x_i,t_{n+1})-u(x_i,t_n)| \\ & \leq \tfrac43 c_1 \Delta t \|u_t\|_{L^\infty(I \times [0,T])} = c \Delta t.
\end{split}\]
Using this and \eqref{erreq4} yields
\begin{equation} \label{erreq5}
 \|E_1\|_{2,h} \leq c\, \Delta t.
\end{equation}
For the term $E_2$ we note, for $1 \leq i \leq N$,
\[ \begin{split}
 |(E_2)_i| & =\left| \big(g(u(x_i,t_{n})) - g(u_i^n)\big)\big)x_i^{-\frac32} u(x_i,t_{n+1})\right| \\
 & \leq \tfrac43 \max \{|u(x_i,t_{n})|,|u_i^n|\} |e_i^n| x_i^{-\frac32} |u(x_i,t_{n+1})| 
  \\ &  =\tfrac43 \max \{|u(x_i,t_{n})|,|u_i^n|\} x_i^{-\alpha} |e_i^n| x_i^{\alpha-\frac32} |u(x_i,t_{n+1})| 
\end{split}\]
Using $x_i^{-\alpha}|u(x_i,t_{n})| \leq x_i^{-1}|u(x_i,t_{n})| \leq  c_1$ and 
\begin{equation} \label{newlabel} x_i^{-\alpha} |u_i^n| \leq \|D^{-\alpha} \bu^n\|_\infty \leq \|D^{-\alpha} \bu^0\|_\infty= d_\alpha,
\end{equation}
cf. Lemma~\ref{lemma4},  this yields
\[
 |(E_2)_i| \leq \tfrac43 \max\{c_1, d_\alpha\} |x_i^\frac12 e_i^n| |x_i^{\alpha-2}u(x_i,t_{n+1})|.
\]
With Cauchy-Schwarz we thus get
\begin{equation} \label{estE2}
 \|E_2\|_{2,h} \leq  \tfrac43 \max\{c_1, d_\alpha\} \|\be^n\|_{D,h} \left(h \sum_{i=1}^N x_i^{2\alpha-4} u(x_i,t_{n+1})^2 \right)^\frac12.
\end{equation}
Using $ \max_{i}|\frac{u(x_i,t_{n+1})}{x_i}| \leq c_1$ and $x_i=ih$  we obtain for the expression above
\begin{align*} 
  & \left(h \sum_{i=1}^N x_i^{2\alpha-4} u(x_i,t_{n+1})^2 \right)^\frac12 \leq c_1 \left(h \sum_{i=1}^N x_i^{2(\alpha-1)}  \right)^\frac12=c_1 \left(h^{2 \alpha -1} \sum_{i=1}^N i^{2(\alpha-1)}  \right)^\frac12 \nonumber \\
 & \leq c_1 \left(h^{2 \alpha -1}\big( 1+ \int_1^{N} z^{2 (\alpha-1)}\, dz \big) \right)^\frac12 \leq \frac{c_1}{\sqrt{2 \alpha -1}}.
\end{align*}
Summarizin*g we obtain
\begin{equation} \label{R5}
   \|E_2\|_{2,h} \leq c \|\be^n\|_{D,h}, \quad \text{with}~ c= \tfrac43 \max\{c_1, d_\alpha\}\frac{c_1}{\sqrt{2 \alpha -1}}.
\end{equation}
Using  the results \eqref{erreq5} and \eqref{R5} in \eqref{erreq8} and with the stability estimate of Lemma~\ref{lemma5} we thus get  
\begin{equation}\label{eq7} \begin{split}
 & \|\be^{n+1}\|_{D,h} \leq (1 + c \Delta t)\|\be^n\|_{D,h} + c\, (\Delta t)^2 \\
  & + (\Delta t)^2 \|D^\frac12 \be_{\partial t}(t_{n+1})\|_{2,h} +  \Delta t\, h^2\| D^\frac12\be_{\partial^2 x}(t_{n+1})\|_{2,h}+ \Delta t\, h^2 \|D^{-\frac12}\be_{\partial x}(t_{n+1})\|_{2,h}.
\end{split} \end{equation}
We finally estimate the three terms in the second line of \eqref{eq7}. We have
\[ \|D^\frac12 \be_{\partial t}(t_{n+1})\|_{2,h} \leq \|\be_{\partial t}(t_{n+1})\|_\infty \leq \frac12 \| \frac{\partial^2 u}{\partial t^2}\|_{L^\infty(I\times[0,T])},
\]
and  
\[
 \| D^\frac12\be_{\partial^2 x}(t_{n+1})\|_{2,h} \leq \| \be_{\partial^2 x}(t_{n+1})\|_\infty \leq \frac{1}{12}\|\frac{\partial^4 u}{\partial  x^4}\|_{L^\infty(I\times[0,T])}.
\]
In  the third term there is a scaling with $D^{-\frac12}$. For this term we obtain, using $\sum_{i=1}^N \frac{1}{i} \leq 1 +\int_1^N \frac{1}{x} \, dx = 1 + \ln N \leq 1 + |\ln h| \leq c |\ln h|$:
\[ \begin{split} 
  \|D^{-\frac12}\be_{\partial x}(t_{n+1})\|_{2,h} & = \frac16 \left( h \sum_{i=1}^N \frac{1}{x_i} \frac{\partial^3 u}{\partial x^3}(\xi_i,t_{n+1})^2 \right)^\frac12 
   \leq \frac{1}{6}\|\frac{\partial^3 u}{\partial  x^3}\|_{L^\infty(I\times[0,T])} \left( \sum_{i=1}^N \frac{1}{i}\right)^\frac12 \\
  & \leq c \|\frac{\partial^3 u}{\partial  x^3}\|_{L^\infty(I\times[0,T])} |\ln h|^\frac12.
\end{split} \]
Using these estimates in \eqref{eq7} we obtain
\[
  \|\be^{n+1}\|_{D,h} \leq (1 + c \Delta t)\|\be^n\|_{D,h} + c\, \Delta t \big( \Delta t + h^2 |\ln h|^\frac12\big).
\]
A standard recursive argument, using $(1+ c \Delta t)^M \leq e^{c T}$ and $\be^0=0$ completes the proof.
\end{proof}

We comment on the result presented in Theorem~\ref{mainthm}. First note that  $d_\alpha=\|D^{-\alpha}\bu^0\|_\infty= \max_{1 \leq i \leq N} x_i^{-\alpha}|u_0(x_i)|$ essentially measures the smoothness of $u_0$ close to 0, cf. Remark~\ref{Rem1}. 
Furthermore, if $u_0 \in C^1(I)$ then $d_\alpha$ is uniformly bounded in $\alpha$: 
\begin{equation} \label{destimate} d_\alpha \leq \|(u_0)_x\|_{L^\infty(I)} \quad \text{for all}~\alpha \in [0,1]. 
\end{equation}
The bound in \eqref{optboound} is optimal with respect to the order of convergence in $\Delta t$ and in $h$, apart from the factor $|\ln h|^\frac12$. To derive this bound it is essential (in our analysis) to take $\alpha > \tfrac12$. For an explanation of this, consider the error propagation relation \eqref{erreq8}. For  deriving the estimates for the local truncation errors in the last line of \eqref{erreq8} we do \emph{not} need $\alpha > \tfrac12$ to hold. The remaining two terms $E_1$, $E_2$ in \eqref{erreq8} are part of the linearization error. For  deriving the bound \eqref{erreq5} for the term $E_1$ we do \emph{not} need $\alpha > \tfrac12$ to hold. To be able to control the term $E_2$, however, we do need the condition $\alpha > \tfrac12$ to be satisfied, cf.~\eqref{R5}. With respect to the choice of $\alpha \in (\frac12,1)$ we note the following. The constant $c$ in \eqref{optboound} becomes smaller if we take a larger $\alpha$, cf. \eqref{R5}. The function $\alpha \to c_\alpha$ is monotonically decreasing on $(\frac12,1)$ with $\lim_{\alpha \to 1}c_\alpha=0$  and thus the condition $\|\bu^0\|_\infty \leq c_\alpha$ becomes more severe for increasing $\alpha$.

\section{Discrete energy estimates} \label{sec:Energy}
Recall that the Dirichlet energy of the HMHF problem in terms of $u$ is given by, cf. \eqref{energy} and \eqref{L2weighted},
\[
  \cE(u)=(Lu,u)_w+\big(\frac{\sin u}{x}, \sin u\big) = (xLu,u)+\big(\frac{\sin u}{x}, \sin u\big). 
\]
The matrix $C$ is the finite difference approximation of the differential operator $L$. The matrix $DC$ is symmetric positive definite, which is the discrete analogon of the fact that $L$ is symmetric positive definite w.r.t $(\cdot, \cdot)_w$, cf.~ \eqref{L2weighted}. We introduce the \emph{discrete Dirichlet energy} functional
\begin{equation}
	\label{discrete_energy}
	\begin{split} \cE_h(\bu) & := h \langle DC \bu,\bu \rangle + h \langle D^{-1} F(\bu),F(\bu) \rangle   
	 \\
\text{with}~~
	F(\bu) & :=  \left( \sin (u_1), \sin(u_2),\hdots,\sin(u_N) \right)^T. \end{split}
\end{equation}
In this section we analyze energy dissipation of the discrete scheme \eqref{discrete} with respect to $\cE_h(\bu)$. 
For this it is convenient to  rewrite \eqref{discrete} as
\begin{equation}
	\label{discrete_v2}
	\frac{\bu^{n+1}-\bu^n}{\Delta t}= - C \bu^{n+1}-G(\bu^{n+1})D^{-2}\bu^{n+1}+\left( G(\bu^{n+1})-G(\bu^{n})\right) D^{-2}\bu^{n+1}.
\end{equation}
\begin{lemma}
	\label{energy_lemma} Take $\alpha  \in [0, 1)$ and assume $\|\bu^0\|_\infty \leq \min\{ 
	\frac{\pi}{4},c_\alpha\}$, with $c_\alpha$ as in Lemma~\ref{lemma4}.
Define $d_\alpha:=\|D^{-\alpha}\bu^0\|_\infty$. 
	 Let $(\bu^{n})_{0 \leq n \leq M}$ solve \eqref{discrete}. The follwing holds for $0 \leq n \leq M-1$, and with $c$ the constant in the estimate \eqref{optboound}:
	\begin{align}   
		\cE_h(\bu^{n+1}) & \leq \cE_h(\bu^{n}), \quad \text{if}~~\Delta t \leq \tfrac34 d_\alpha^{-2} h^{2(1-\alpha)}~\text{holds.} \label{diss2} \\
		\cE_h(\bu^{n+1}) & \leq \cE_h(\bu^{n}) + \tfrac{64}{3} c^2 d_\alpha^2 \big(h^{2(\alpha-1)}(\Delta t)^2 +h^{2(\alpha +1)}|\ln h|\big), \quad \text{if}~~\alpha > \tfrac12~\text{holds.}  \label{diss1}
	\end{align}
\end{lemma}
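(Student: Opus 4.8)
The plan is to obtain both estimates from a single energy identity, the two cases differing only in how one remainder term is treated. First I would decompose $\cE_h(\bu)=h\langle DC\bu,\bu\rangle+\Phi(\bu)$ with $\Phi(\bu):=h\sum_{i=1}^N x_i^{-1}\sin^2 u_i$, and record the gradients: since $DC$ is symmetric, the first part has gradient $2hDC\bu$, while $\nabla\Phi(\bu)=2hG(\bu)D^{-1}\bu$ (using $\sin(2u_i)=2u_ig(u_i)$). Testing the reformulation \eqref{discrete_v2} with $\bw:=\bu^{n+1}-\bu^n$ in the inner product $h\langle D\,\cdot\,,\cdot\rangle$, and using $DG(\bu^{n+1})D^{-2}=G(\bu^{n+1})D^{-1}$, the first two terms on the right assemble into $-\tfrac12\langle\nabla\cE_h(\bu^{n+1}),\bw\rangle$. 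Inserting the second-order Taylor expansion $\langle\nabla\cE_h(\bu^{n+1}),\bw\rangle=\cE_h(\bu^{n+1})-\cE_h(\bu^n)+\tfrac12\langle\nabla^2\cE_h(\eta)\bw,\bw\rangle$ (with $\eta$ on the segment $[\bu^n,\bu^{n+1}]$) and rearranging leads to
\begin{equation}\label{eq:enid}
\cE_h(\bu^{n+1})-\cE_h(\bu^n)=-\tfrac{2}{\Delta t}\|\bw\|_{D,h}^2-\tfrac12\langle\nabla^2\cE_h(\eta)\bw,\bw\rangle+2h\langle DR,\bw\rangle,
\end{equation}
where $R:=(G(\bu^{n+1})-G(\bu^n))D^{-2}\bu^{n+1}$. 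The Hessian $\nabla^2\cE_h(\eta)=2hDC+\mathrm{diag}(2hx_i^{-1}\cos(2\eta_i))$ is positive semidefinite: $DC$ is SPD, and by Lemma~\ref{lemma3} together with the assumption $\|\bu^0\|_\infty\le\tfrac\pi4$ one has $\|\eta\|_\infty\le\tfrac\pi4$, so $\cos(2\eta_i)\ge0$. This is precisely where the bound $\tfrac\pi4$ (rather than $\tfrac\pi2$) is used. Discarding this nonpositive term gives the working inequality $\cE_h(\bu^{n+1})-\cE_h(\bu^n)\le-\tfrac{2}{\Delta t}\|\bw\|_{D,h}^2+2h\langle DR,\bw\rangle$.

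The decisive step is to bound the remainder in its bilinear ``$R\cdot\bw$'' form rather than converting it to $\tfrac{\Delta t}{2}h\langle DR,R\rangle$ by Young's inequality. Componentwise $h\langle DR,\bw\rangle=h\sum_i(g(u_i^{n+1})-g(u_i^n))\tfrac{u_i^{n+1}}{x_i}w_i$; applying Lemma~\ref{lemma8} to the first factor and the stability bounds $x_i^{-\alpha}|u_i^n|,\,x_i^{-\alpha}|u_i^{n+1}|\le d_\alpha$ of Lemma~\ref{lemma4} to the rest, I obtain $\max\{|u_i^{n+1}|,|u_i^n|\}\tfrac{|u_i^{n+1}|}{x_i}\le d_\alpha^2x_i^{2\alpha-1}$ and hence
\[
2h\langle DR,\bw\rangle\le\tfrac83 d_\alpha^2\,h\sum_{i=1}^N x_i^{2\alpha-1}w_i^2\le\tfrac83 d_\alpha^2\,h^{2(\alpha-1)}\|\bw\|_{D,h}^2,
\]
the last inequality factoring $x_i^{2\alpha-1}=x_i^{2\alpha-2}x_i$ and using $\max_i x_i^{2\alpha-2}=h^{2(\alpha-1)}$ (since $\alpha<1$). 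Keeping one $w_i$ inside the $\|\cdot\|_{D,h}$-norm is what yields the optimal power $h^{2(\alpha-1)}$; the Young route would replace the weight $x_i^{2\alpha-1}$ by $x_i^{2\alpha-3}$, costing an extra factor $h^{-2}$ and producing the wrong order. I expect this bookkeeping of the weight to be the main obstacle.

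For \eqref{diss2} I would then invoke the step-size restriction: if $\Delta t\le\tfrac34 d_\alpha^{-2}h^{2(1-\alpha)}$ then $\tfrac83 d_\alpha^2 h^{2(\alpha-1)}\le\tfrac{2}{\Delta t}$, so the remainder estimate is completely absorbed by the dissipation term $-\tfrac{2}{\Delta t}\|\bw\|_{D,h}^2$ in the working inequality, giving $\cE_h(\bu^{n+1})\le\cE_h(\bu^n)$. Note that this case needs only $\alpha\in[0,1)$, and the constant $\tfrac34$ matches the reciprocal of $\tfrac83$ up to the factor $2$.

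For \eqref{diss1}, with $\alpha>\tfrac12$, I would instead discard the (nonpositive) dissipation term and control $\|\bw\|_{D,h}$ through the discretization error. Splitting $\bw=-\be^{n+1}+\be^n+(\bu(t_{n+1})-\bu(t_n))$, Theorem~\ref{mainthm} (whose hypothesis $\alpha>\tfrac12$ is exactly the one appearing here) bounds $\|\be^{n}\|_{D,h},\|\be^{n+1}\|_{D,h}\le c(\Delta t+h^2|\ln h|^{\frac12})$, while $|u(x_i,t_{n+1})-u(x_i,t_n)|\le\Delta t\|u_t\|_{L^\infty}$ together with $h\sum_i x_i\le\tfrac12$ gives $\|\bu(t_{n+1})-\bu(t_n)\|_{D,h}\le c\,\Delta t$. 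Hence $\|\bw\|_{D,h}\le2c(\Delta t+h^2|\ln h|^{\frac12})$, the smooth time increment being absorbed into $c$. Inserting this into the remainder bound and using $(\Delta t+h^2|\ln h|^{\frac12})^2\le2(\Delta t^2+h^4|\ln h|)$ gives
\[
\cE_h(\bu^{n+1})-\cE_h(\bu^n)\le\tfrac83 d_\alpha^2 h^{2(\alpha-1)}\|\bw\|_{D,h}^2\le\tfrac{64}{3}c^2 d_\alpha^2\big(h^{2(\alpha-1)}(\Delta t)^2+h^{2(\alpha+1)}|\ln h|\big),
\]
which is \eqref{diss1}, the exponent $2(\alpha-1)+4=2(\alpha+1)$ accounting for the second term.
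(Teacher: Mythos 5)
Your proposal is correct and follows essentially the same route as the paper's proof: testing \eqref{discrete_v2} with $D(\bu^{n+1}-\bu^n)$, exploiting convexity of the nonlinear energy contribution on $\{\|\bv\|_\infty\le\tfrac{\pi}{4}\}$ (which is exactly where the $\tfrac{\pi}{4}$ hypothesis enters), bounding the remainder by $\tfrac83 d_\alpha^2 h^{2(\alpha-1)}\|\bu^{n+1}-\bu^n\|_{D,h}^2$ via Lemma~\ref{lemma8}, the stability bound \eqref{newlabel} and the weight estimate $x_i^{2\alpha-1}\le h^{2(\alpha-1)}x_i$, and then either absorbing this into the dissipation term (for \eqref{diss2}) or invoking Theorem~\ref{mainthm} (for \eqref{diss1}), with the same constants. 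The only cosmetic difference is that you obtain the working inequality from a second-order Taylor expansion of $\cE_h$ with positive semidefinite Hessian, whereas the paper uses the polarization identity for $h\langle DC\,\cdot,\cdot\rangle$ together with the first-order convexity inequality for $\sin^2$; these are equivalent and yield the identical intermediate estimate.
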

\begin{proof} 
	We take the $h$-weighted scalar product of \eqref{discrete_v2} with $D(\bu^{n+1}-\bu^n)$. This yields
	\[  \begin{split}
	& \frac{h}{\Delta t} \langle D(\bu^{n+1}-\bu^n),\bu^{n+1}-\bu^n \rangle + h \langle DC \bu^{n+1},\bu^{n+1}-\bu^{n} \rangle \notag \\
		& + h \langle D^{-1} G(\bu^{n+1})\bu^{n+1},\bu^{n+1}-\bu^n \rangle \notag \\ & = h
		 \langle \left( G(\bu^{n+1})-G(\bu^{n})\right) D^{-1}\bu^{n+1}, \bu^{n+1}-\bu^{n} \rangle. 
		 \end{split}
	\]
Using 
	\begin{align*}
		\langle DC \bu^{n+1},\bu^{n+1}-\bu^{n} \rangle = &\tfrac{1}{2}\left(\langle DC \bu^{n+1},\bu^{n+1} \rangle - \langle DC \bu^{n},\bu^{n}\rangle \right) \\
		&+\tfrac{1}{2}  \langle DC (\bu^{n+1}-\bu^n),\bu^{n+1}-\bu^n  \rangle
	\end{align*}
we obtain
  \begin{align} 
	& \frac{1}{\Delta t} \|\bu^{n+1}-\bu^n\|_{D,h}^2+ \frac{h}{2} \langle DC \bu^{n+1},\bu^{n+1} \rangle \notag 
		 + h \langle D^{-1} G(\bu^{n+1})\bu^{n+1},\bu^{n+1}-\bu^n \rangle \notag  \notag \\ & \leq \frac{h}{2}\langle DC \bu^{n},\bu^{n} \rangle +
		 h\langle \left( G(\bu^{n+1})-G(\bu^{n})\right) D^{-1}\bu^{n+1},\bu^{n+1}-\bu^{n} \rangle. 
		 \label{ee} \end{align}
		 To control the term $\langle D^{-1} G(\bu^{n+1})\bu^{n+1},\bu^{n+1}-\bu^n \rangle$ we use a convexity argument. 
		 The function $f(y)=\sin^2(y)$ is convex on $[-\frac{\pi}{4}, \frac{\pi}{4}]$. Hence, for all $x,y \in [-\frac{\pi}{4}, \frac{\pi}{4}]$ we have $f(x) \geq f(y) + f'(y)(x-y)$. 
	 From $\| \bu^0 \|_\infty \leq \pi/4$ and  Lemma \ref{lemma3}, we have $\| \bu^{n} \|_\infty \leq \pi/4$ for all $n=0,\hdots,M$. Thus we get 
	\[
		\sin^2 (u^{n}_i) \geq \sin^2 (u^{n+1}_i) + \sin(2u_i^{n+1})\left(u_i^n-u_i^{n+1} \right),
	\]
	and
	\begin{align*}
		& \langle D^{-1} G(\bu^{n+1})\bu^{n+1},\bu^{n+1}-\bu^n \rangle =  \sum_{i=1}^N \frac{1}{x_i} \frac{\sin(2u_i^{n+1})}{2}(u_i^{n+1}-u_i^n) \\
		&\geq \sum_{i=1}^N \frac{1}{2x_i}\left(\sin^2(u_i^{n+1})-\sin^2(u_i^{n})\right) \notag \\
		&=\tfrac{1}{2}\left(\langle D^{-1} F(\bu^{n+1}),F(\bu^{n+1})\rangle - \langle D^{-1} F(\bu^{n}),F(\bu^{n})\rangle \right).
	\end{align*}
	Using this in \eqref{ee} we obtain
	\begin{align} 
	& \frac{2}{\Delta t} \|\bu^{n+1}-\bu^n\|_{D,h}^2+ \cE_h(\bu^{n+1}) \notag \\ 
		  & \leq \cE_h(\bu^{n}) +
		2 h\langle \left( G(\bu^{n+1})-G(\bu^{n})\right) D^{-1}\bu^{n+1},\bu^{n+1}-\bu^{n} \rangle. 
		 \label{eee} \end{align}
	It remains to control the last term in \eqref{eee}. Note:
	\begin{align*} &  h\langle \left( G(\bu^{n+1})-G(\bu^{n})\right) D^{-1}\bu^{n+1},\bu^{n+1}-\bu^{n} \rangle \\ & = h \sum_{i=1}^N \big(g(u_i^{n+1})-g(u_i^{n})\big) x_i^{-1} u_i^{n+1} (u_i^{n+1}-u_i^n).
	\end{align*}
	Recall that $x_i^{-\alpha} |u_i^n| \leq d_\alpha$ holds for all $n$, cf. \eqref{newlabel}.  Using this, Lemma~\ref{lemma8} and $x_i^{2(\alpha-1)} \leq h^{2(\alpha-1)}$ we obtain
	\begin{align*}   
	  & h\langle \left( G(\bu^{n+1})-G(\bu^{n})\right) D^{-1}\bu^{n+1},\bu^{n+1}-\bu^{n} \rangle \leq \tfrac43 d_\alpha^2h \sum_{i=1}^N x_i^{2\alpha -1} |u_i^{n+1}-u_i^n|^2 \\
	& \leq  \tfrac43 d_\alpha^2h \sum_{i=1}^N x_i^{2(\alpha -1)} x_i |u_i^{n+1}-u_i^n|^2 \leq \tfrac43 d_\alpha^2 h^{2(\alpha-1)} \|\bu^{n+1} - \bu^n\|_{D,h}^2.
\end{align*}
We consider two ways to proceed. In the first approach we   assume that $\frac43 d_\alpha^2 h^{2(\alpha-1)} \leq \frac{1}{\Delta t}$ holds. Using this we observe that the last term in  \eqref{eee}  can be absorbed in the term on the left hand side in \eqref{eee}.  This proves the result \eqref{diss2}. The second possibility is to assume $\alpha > \frac12$ and use the result of Theorem~\ref{mainthm}, i.e., $\|\bu^{n+1} - \bu^n\|_{D,h} \leq 2 c(\Delta t +h^2 |\ln h|^\frac12)$ and thus
\[
  h\langle \left( G(\bu^{n+1})-G(\bu^{n})\right) D^{-1}\bu^{n+1},\bu^{n+1}-\bu^{n} \rangle  \leq
  \tfrac{32}{3} d_\alpha^2 h^{2(\alpha-1)} c^2\big((\Delta t)^2 + h^4 |\ln h|\big).
\]
Using this in \eqref{eee} we obtain the estimate in \eqref{diss1}.
\end{proof}

We discuss the results \eqref{diss2} and \eqref{diss1}. In \eqref{diss2} we have  monotone discrete energy dissipation under the condition $\Delta t \leq \tfrac34 d_\alpha^{-2} h^{2(1-\alpha)}$. We make the reasonable assumption that $ d_\alpha$ is uniformly bounded, cf.~\eqref{destimate}. Hence, the condition is of the form $\Delta t \leq C h^{2(1-\alpha)}$. Assume a scaling property $\Delta t = h^{\beta}$ with $\beta >0$. Then this  condition is satisfied iff $\beta > 2(1-\alpha)$. For example, for $\alpha= \frac12$ we require (only) $\Delta t \sim h^\beta$ with $\beta >1$. In \eqref{diss1}  we have a perturbed energy dissipation. The perturbation is of the form $C\big(h^{2(\alpha-1)}(\Delta t)^2 +h^{2(\alpha +1)}|\ln h|\big)$. The term $h^{2(\alpha +1)}|\ln h|$ tends to $0$ for $h \downarrow 0$. For the other term to go to zero we need a condition that bounds $\Delta t$ in  terms of $h$. Consider again the scaling $\Delta t = h^{\beta}$. Then $h^{2(\alpha-1)}(\Delta t)^2 \to 0$ for $h \downarrow 0$ iff $\beta > 1 - \alpha$, which is a  weaker condition than the one used in \eqref{diss2}. 

Finally note that for $\alpha=0$, i.e., under the weakest assumption on the initial condition, namely $\|\bu^0\|_\infty \leq c_0=\frac{\pi}{2}$, we still have a monotone discrete energy dissipation as in \eqref{diss2}, provided $\Delta t \leq \tfrac34 \|\bu^0\|_\infty^{-2} h^2$ is satisfied.

\section{Numerical experiments}\label{sec:NumericalExp}
We consider a problem as in \eqref{PDE} with $\bu_0(x) = \pi(1-x)x$ and $T=0.1$.  In this case we have a globally smooth solution. We apply the method \eqref{discrete} and determine the errors at the end time point, i.e. $\|\bu_{\rm ref}^M-\bu^M\|_{D,h}$. The code can be found in \cite{nguyen_2025_15481333}.
\begin{remark} \rm
A sufficiently accurate reference solution $\bu_{\rm ref}$ is determined by using the scheme \eqref{discrete} with sufficiently small mesh and time step sizes. The accuracy is validated by comparing numerical solutions with those of a BDF2 version of \eqref{discrete}.
\end{remark}

The numerical solution is shown in Figure~\ref{fig:plot_solution}. 
\begin{figure}[ht!]
   \centering
	\includegraphics[scale=0.16]{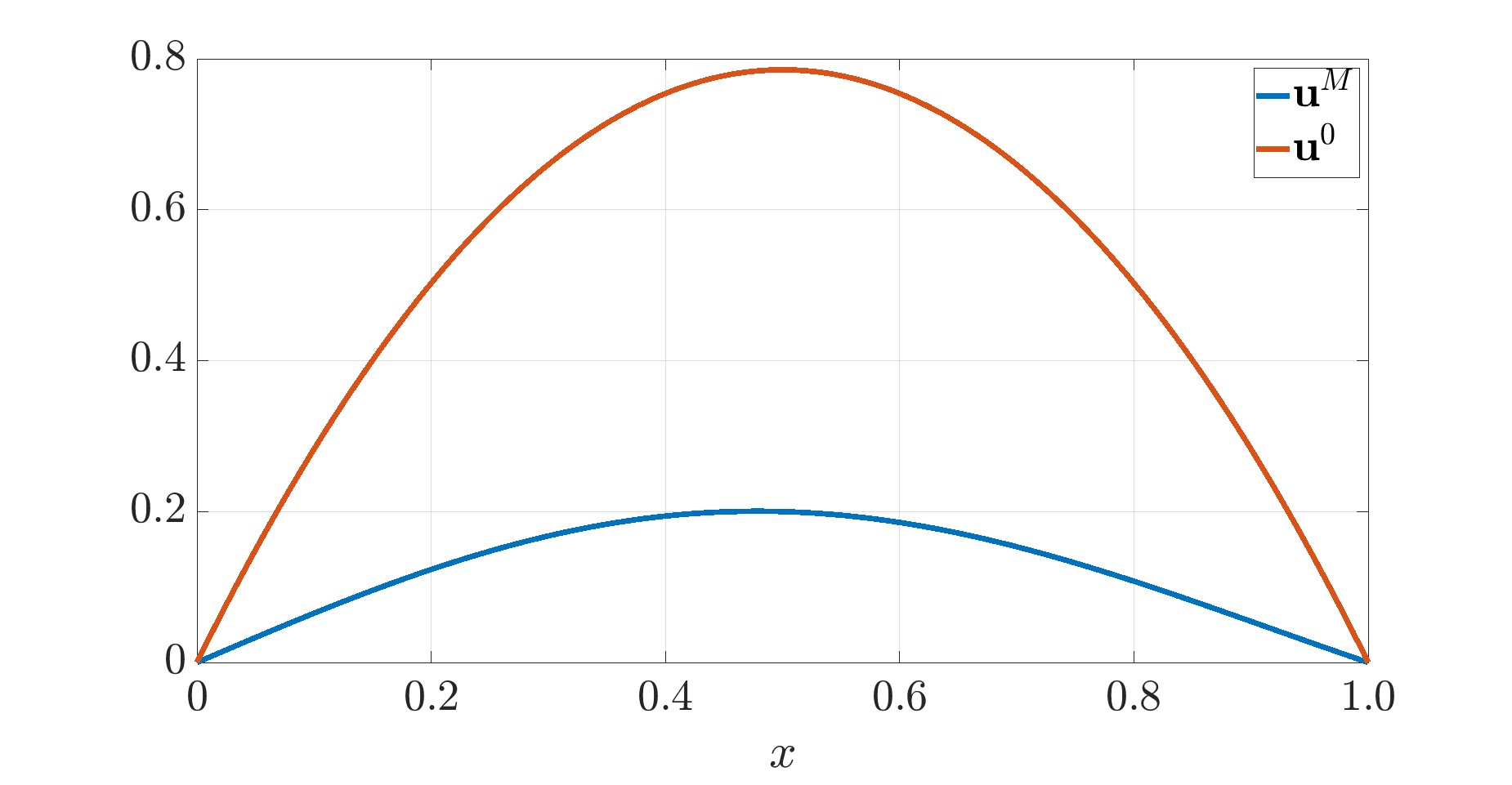}
	\caption{Solution $\bu^M$ of discrete problem \eqref{discrete} for $ h=10^{-3}, \Delta t = 10^{-6}$.}
	\label{fig:plot_solution}
\end{figure}
In
Table~\ref{table:SSHMHF_BDF1_conv_example_2} and Table~\ref{table:SSHMHF_h_conv_example_2} the errors for different mesh and time step sizes are shown. The  convergence orders with respect to the time step and the mesh size are as expected.
In  Table~\ref{table:SSHMHF_h_conv_example_2}, for $h=2^{-7}$ the error in the time discretization starts to influence the size of the discretization error,  and this is why  the convergence order is decreasing.
\begin{table}[ht!]
	\centering
	\begin{tabular}{ |p{2.5cm}|| p{2.65cm}| p{1cm}|}
		\hline
		$h = 10^{-3}$ & $\| \bu_{\text{ref}}^{M}-\bu^M\|_{D,h}$  & EOC \\
		\hline
		$\Delta t=10^{-2}$  & $1.0320e-02$ & $-$  \\
		$\Delta t=5\cdot 10^{-3}$  & $5.2699e-03$ & $0.97$ \\
		$\Delta t=2.5\cdot 10^{-3}$ & $2.6633e-03$ & $0.98$ \\
		$\Delta t=1.25\cdot 10^{-3}$ & $1.3389e-03$ & $0.99$ \\
		$\Delta t=6.25\cdot 10^{-4}$ & $6.7126e-04$ & $1.00$ \\
		\hline 
	\end{tabular}
	\caption{Discretization error for \eqref{discrete} }
	\label{table:SSHMHF_BDF1_conv_example_2}
\end{table}

\begin{table}[ht!]
	\centering
	\begin{tabular}{ |p{2.5cm}|| p{2.65cm}| p{1cm}|}
		\hline
		$\Delta t= 10^{-6}$ & $\| \bu_{\text{ref}}^{M}-\bu^M\|_{D,h}$  & EOC \\
		\hline 
		$h=2^{-2}$ & $6.7606e-03 $ & $-$ \\
		$h =2^{-3}$  & $1.6630e-03$ & $2.02$  \\
		$h=2^{-4}$  & $4.1413e-04$ & $2.00$ \\
		$h=2^{-5}$ & $1.0408e-04$ & $1.99$ \\
		$h=2^{-6}$ & $2.6716e-05$ & $1.96$ \\
		$h=2^{-7}$ & $7.3860e-06$ & $1.85$ \\
		\hline 
	\end{tabular}
	\caption{Discretization error for \eqref{discrete}}
	\label{table:SSHMHF_h_conv_example_2}
\end{table}

We performed an experiment to validate Lemma~\ref{lemma4}. For the initial condition $\bu_0(x) = \pi(1-x)x$ we have $\|\bu_0\|_\infty=\tfrac14 \pi=: c_\alpha $. We then have a corresponding $\alpha$ as defined in Lemma~\ref{lemma4} with value $\alpha = \sqrt{\frac{2}{\pi}}$. For this $\alpha$ value we show the results for $\|\bu_\alpha^n\|_\infty=\|D^{-\alpha}\bu^n\|_\infty$ in Figure~\ref{fig:D_alpha}.  We observe that the numerical solution is decreasing monotonically with respect to time $t$,  as predicted by Lemma \ref{lemma4}.
\begin{figure}[ht!]
    \centering
	\includegraphics[scale=0.16]{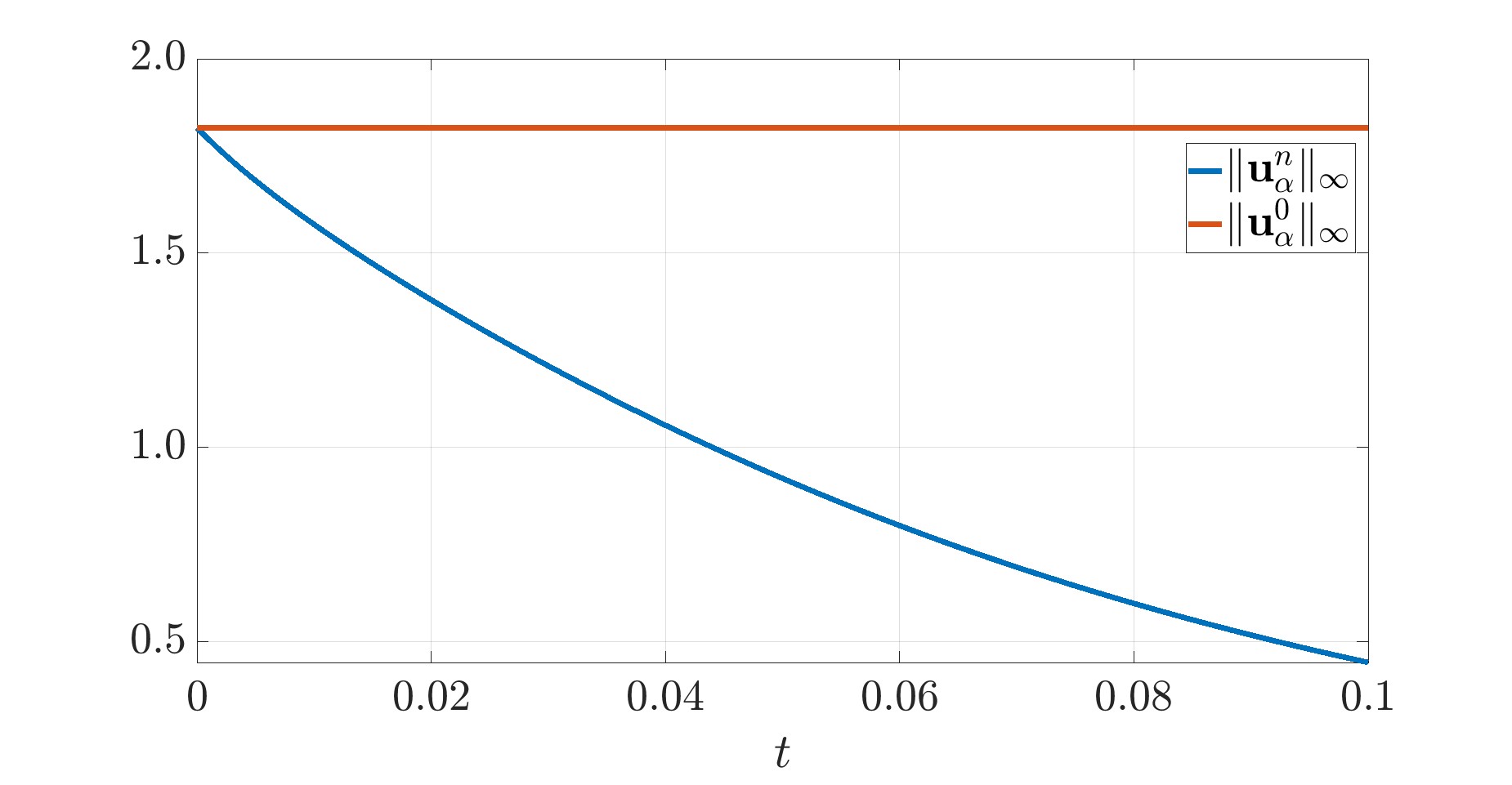}
	\caption{$\|D^{-\alpha}\bu^n\|_\infty$, $\alpha=\sqrt{\tfrac{2}{\pi}}$ and $\bu^n$ from \eqref{discrete}; $ h=10^{-3}, \Delta t = 10^{-6}$}
	\label{fig:D_alpha}
\end{figure}

We also computed the discrete energy values $\cE_h(\bu^n)$, cf. \eqref{discrete_energy}. We used mesh size and time step values $ h=10^{-3}$, $\Delta t = 10^{-6}$, for which the condition in \eqref{diss2} is satisfied. Results are shown in Figure~\ref{fig:Energy_smooth}. We observe a monotone decreasing behavior, consistent with the result in Lemma~\ref{energy_lemma}. Further experiments indicate that the condition in \eqref{diss2}  is not essential for the discrete energy decrease to hold. 
\begin{figure}[ht!]
    \centering
	\includegraphics[scale=0.16]{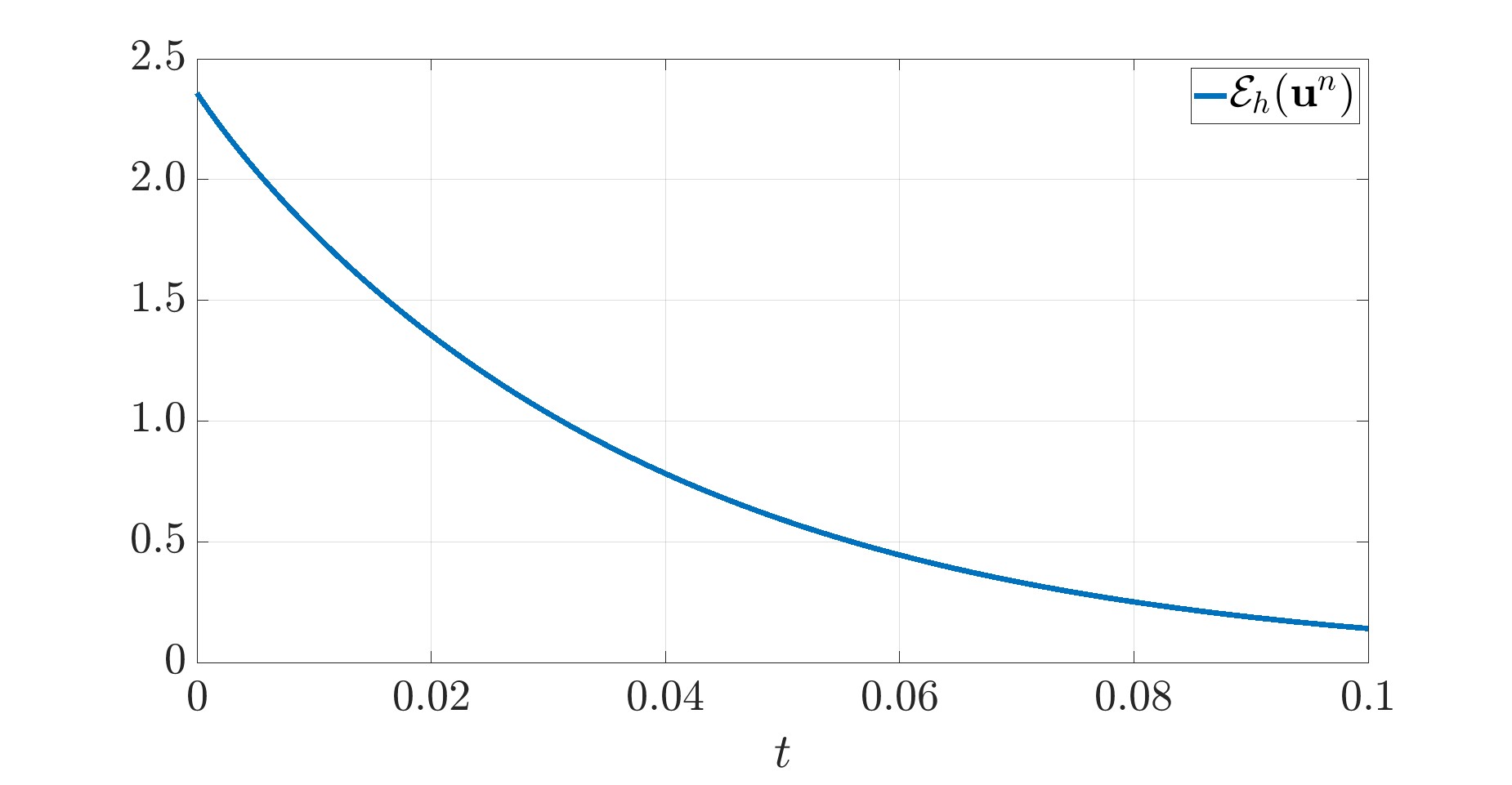}
	\caption{$\cE_h(\bu^n)$ for $\bu^n$ from \eqref{discrete};  $ h=10^{-3}$, $\Delta t = 10^{-6}$}
	\label{fig:Energy_smooth}
\end{figure}

Finally, although not covered by the analyis of this paper, we present a few results for an example in which finite time blow up is expected. We consider \eqref{PDE} with $\bu_0(x) = 9\pi(1-x)x$ and $T=0.01$. We then have $\bu_0(0.5)> \pi$, which according to \cite[Proposition 2.2]{Bertsch_DalPasso_vanderHout_2002}, may lead to blow up in finite time with energy decreasing in time. We discretize the problem using \eqref{discrete} with  $ h=10^{-3}, \Delta t = 10^{-6}$. The discrete solution at $t=T=M \Delta t$ is shown in Figure~\ref{fig:blowup_sol_h_1e-3_dt_1e-6_bdf1}.
\begin{figure}[ht!]
    \centering
	\includegraphics[scale=0.16]{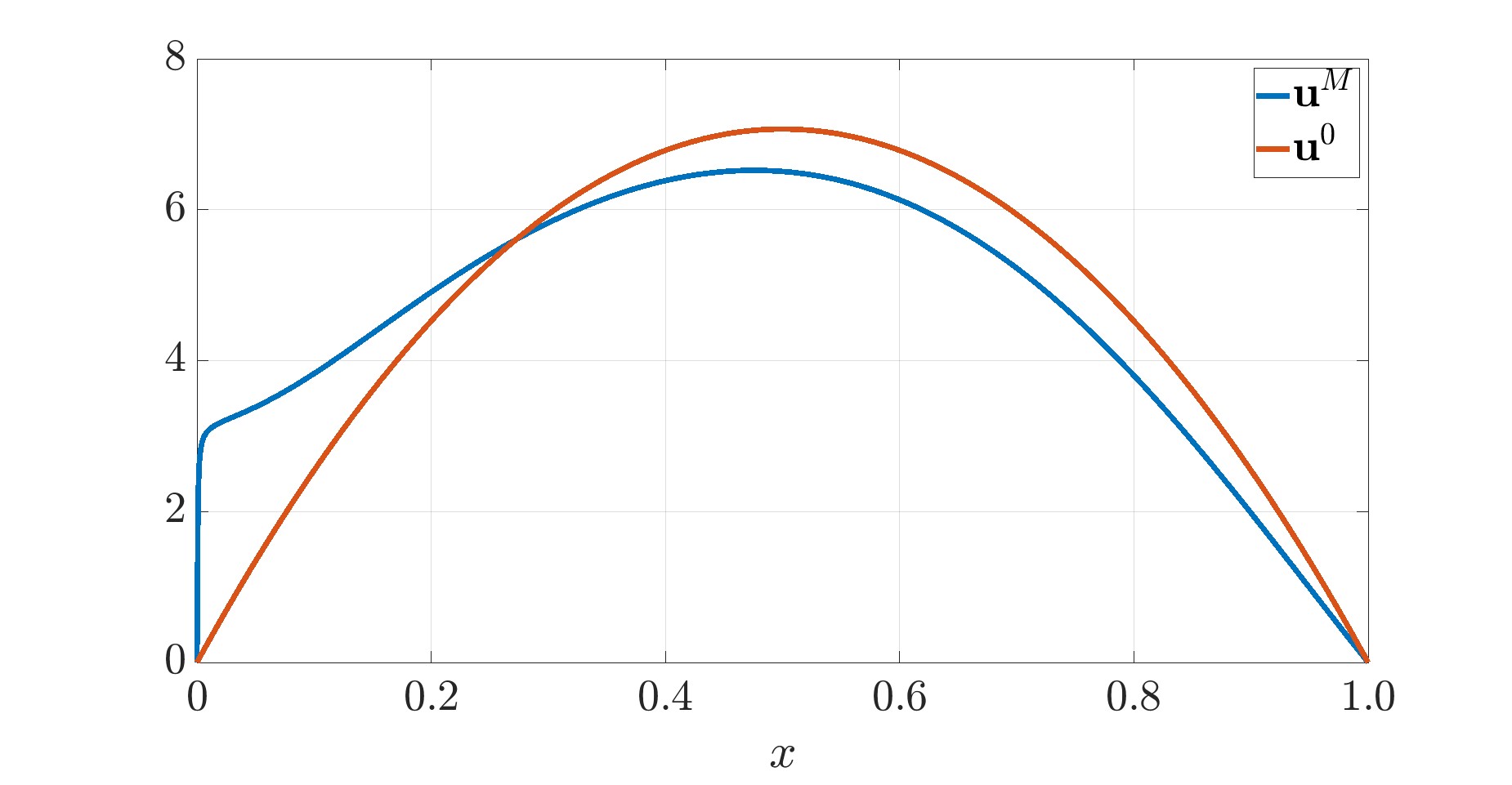}
	\caption{$\bu_0(x) = 9\pi(1-x)x$, $T=0.01$. Discrete solution $\bu^n$ from \eqref{discrete}; $ h=10^{-3}, \Delta t = 10^{-6}$.}
	\label{fig:blowup_sol_h_1e-3_dt_1e-6_bdf1}
\end{figure}

The result suggests that indeed  a finite-time blow up will occur, which agrees with \cite[Section 4.3 and Section 4.4]{HaynesHuangZegeling2013} for a similar initial condition with the property that $\bu_0(x)>\pi$ for some $x \in (0,1)$.
The discrete energy is decreasing,  cf.~Figure \ref{fig:energy_dissipation_blowup}.
\begin{figure}[ht!]
    \centering
	\includegraphics[scale=0.16]{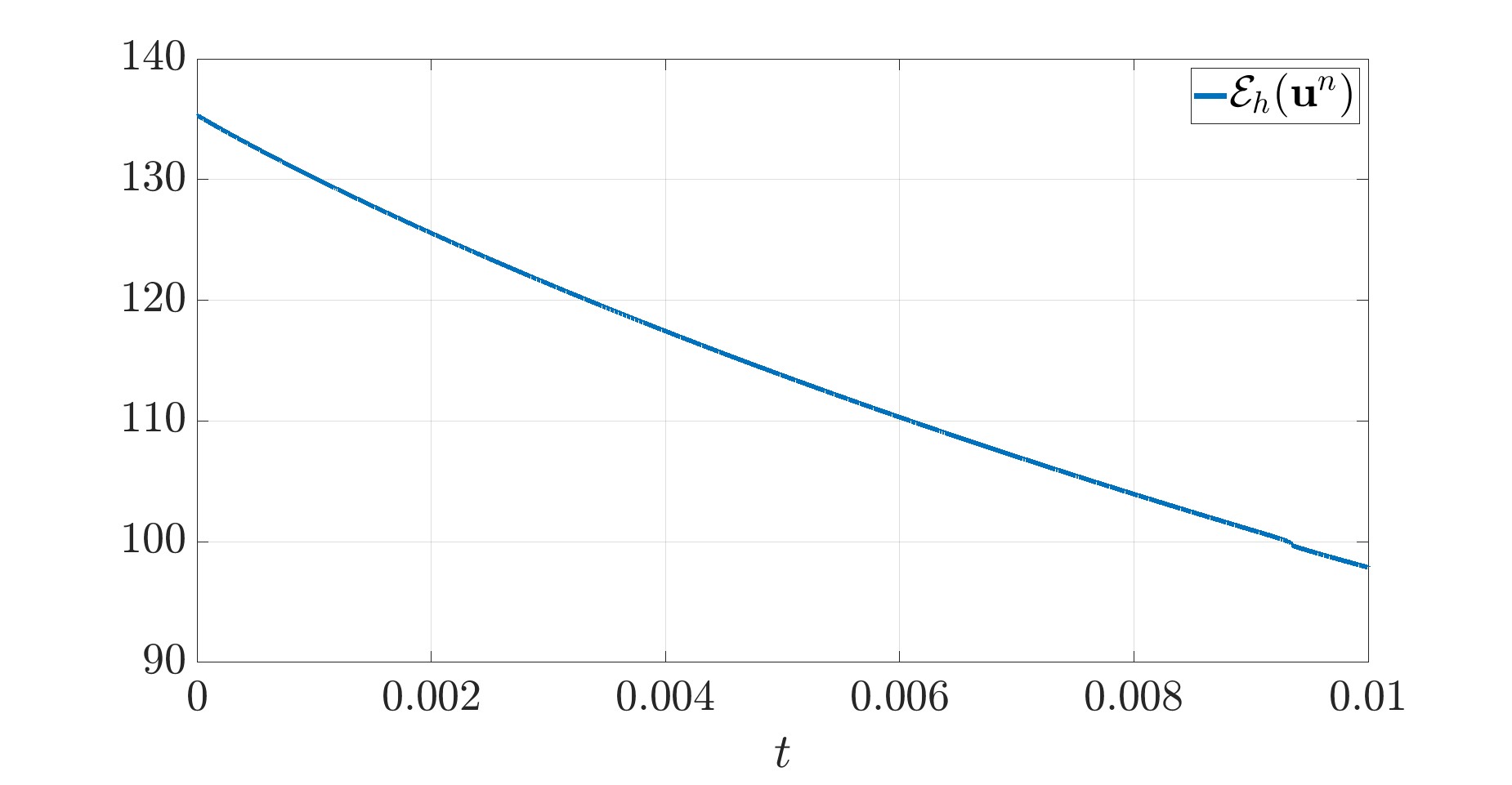}
	\caption{$\bu_0(x) = 9\pi(1-x)x$; $\cE_h(\bu^n)$  for $\bu^n$ from \eqref{discrete}; $ h=10^{-3}, \Delta t = 10^{-6}$.}
	\label{fig:energy_dissipation_blowup}
\end{figure}
 We observe in Figure \ref{fig:D_alpha_h_1e-3_blowup} that $\|D^{-1} \bu^n\|_\infty$  is monotonically \emph{increasing}. There is also a steep increase of $\|D^{-1} \bu^n\|_\infty$ at $t \approx 0.9 \cdot 10^{-2}$ which is an indication of  blow up of the solution close to this point in  time. 
 
\begin{figure}[ht!]
    \centering
	\includegraphics[scale=0.16]{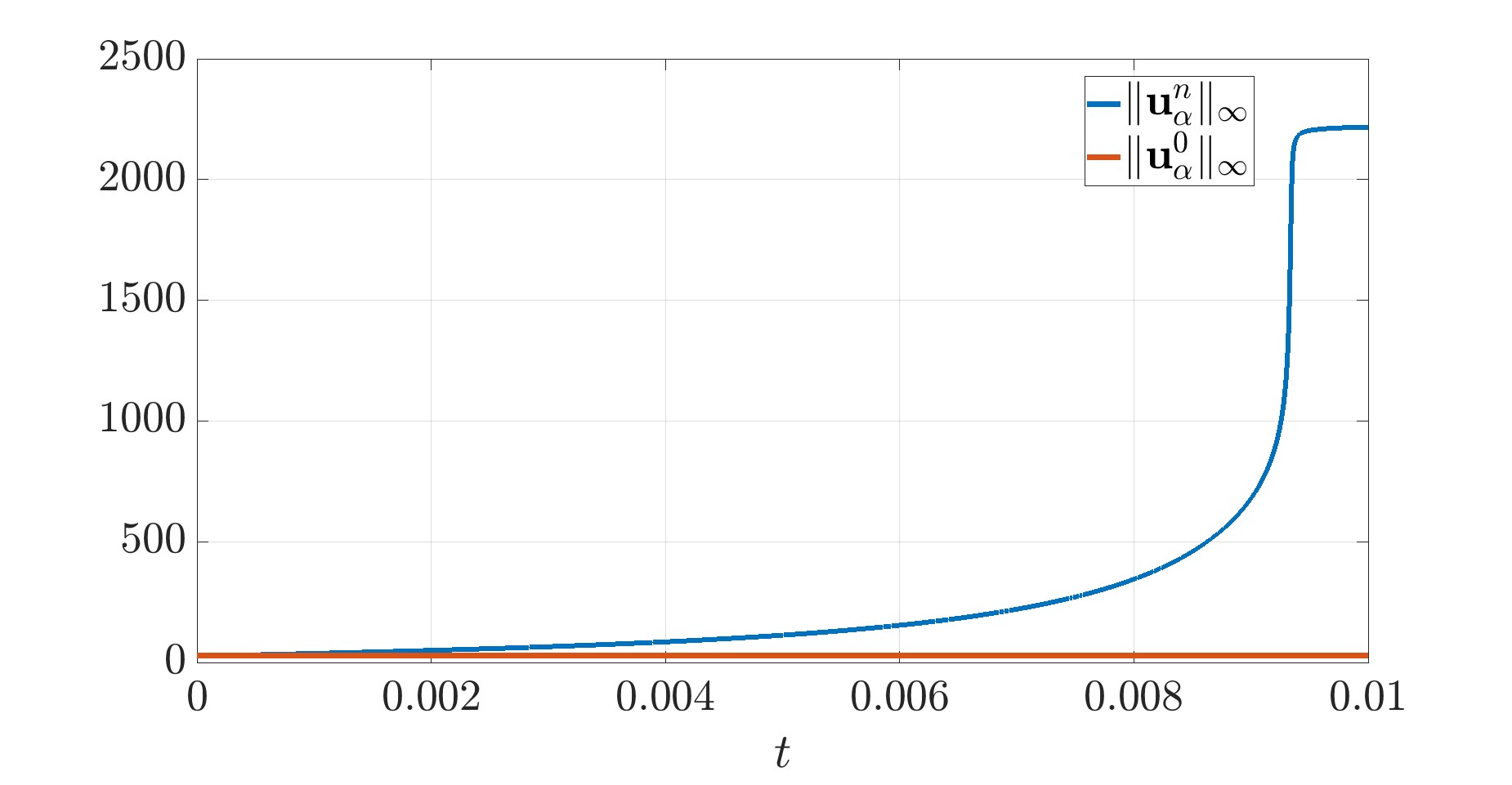}
	\caption{$\bu_0(x) = 9\pi(1-x)x$; $\alpha = 1$ for $\bu^n$ from \eqref{discrete}; $h=10^{-3}$, $\Delta t= 10^{-6}$.}
	\label{fig:D_alpha_h_1e-3_blowup}
\end{figure}
These results suggest that this is an  example of a blow up behavior as described in \cite[Proposition 2.2]{Bertsch_DalPasso_vanderHout_2002}. A  thorough numerical investigation of blow behavior in (radially symmetric) HMHF problems is left for future work. 

\ \\[10ex]
{\bf Acknowledgements} The authors acknowledge funding by the Deutsche Forschungsgemeinschaft (DFG, German Research Foundation) – project number 442047500 – through the Collaborative Research Center “Sparsity and Singular
Structures” (SFB 1481).}
\bibliographystyle{siam}
\bibliography{literature}{}

\end{document}